\documentclass[reqno, 10pt, a4paper]{amsart}

\usepackage{fullpage}
\selectfont

\usepackage{algorithm,algpseudocode}

\algnewcommand{\IIf}[1]{\State\algorithmicif\ #1\ \algorithmicthen}
\algnewcommand{\EndIIf}{\unskip\ \algorithmicend\ \algorithmicif}

\usepackage[utf8]{inputenc}
\usepackage[OT2,T1]{fontenc}
\usepackage[english]{babel}

\usepackage{amsmath}
\usepackage{amsfonts}
\usepackage{amssymb}
\usepackage{amsthm}

\usepackage{mathrsfs}
\usepackage{listings}
\usepackage[mathcal]{euscript}
\usepackage{bbm}%

\usepackage{enumitem}
\usepackage{multirow}

\usepackage[dvipsnames]{xcolor}
\usepackage{graphicx}

\usepackage{tikz-cd}
\usetikzlibrary{matrix,positioning}
\usepackage[all]{xy}

\usepackage{colortbl}
\definecolor{mygray}{gray}{0.92}

\usepackage{array}
\newcolumntype{C}[1]{>{\centering\arraybackslash$}p{#1}<{$}}

\usepackage{breqn}
\newcounter{myequation}[equation]

\usepackage{float}
\restylefloat{table}
\usepackage{multirow}

\usepackage{hyperref}

\theoremstyle{plain}
\newtheorem{theorem}{Theorem}[section]

\newtheorem{proposition}[theorem]{Proposition}
\newtheorem{lemma}[theorem]{Lemma}

\theoremstyle{definition}
\newtheorem{definition}[theorem]{Definition}

\theoremstyle{remark}
\newtheorem{remark}[theorem]{Remark}

\numberwithin{equation}{section}

\def\epsilon{\varepsilon}

\def\tilde{\widetilde}

\DeclareMathOperator{\Aut}{Aut}

\DeclareMathOperator{\Cl}{Cl}

\DeclareMathOperator{\cont}{cont}

\DeclareMathOperator{\disc}{disc}

\DeclareMathOperator{\End}{End}
\DeclareMathOperator{\Ends}{End^{sym}}

\DeclareMathOperator{\Gal}{Gal}
\DeclareMathOperator{\GL}{GL}

\DeclareMathOperator{\Hom}{Hom}

\DeclareMathOperator{\id}{Id}

\DeclareMathOperator{\NS}{NS}

\DeclareMathOperator{\SL}{SL}

\DeclareMathOperator{\tr}{tr}

\DeclareMathOperator{\Tr}{Tr}

\DeclareMathOperator{\Ht}{\textrm{Herm}^{\textrm{twist}}_2}

\newcommand{\dq}{{/\kern -3pt/}}

\def\a{\mathfrak{a}}
\def\b{\mathfrak{b}}

\def\C{\mathbb{C}}

\def\O{\mathcal{O}}

\def\Q{\mathbb{Q}}

\def\Z{\mathbb{Z}}

\usepackage{bm}

\def\Ac{\mathcal{A}}

\def\Oc{\mathcal{O}}
\def\Pc{\mathcal{P}}

\def\Sc{\mathcal{S}}

\hypersetup{
  pdfauthor   = {Lercier, Liu, Lorenzo Garc\'ia, Ritzenthaler},
  pdftitle    = {Reduction type of smooth quartics},
  pdfsubject  = {},
  pdfkeywords = {},
  backref=true, pagebackref=true, hyperindex=true, colorlinks=true,
  breaklinks=true, urlcolor=blue, linkcolor=blue, citecolor=blue,
  bookmarks=true, bookmarksopen=true
}

\begin{document}

\title{Refined Humbert invariants and subvarieties of $\Ac_2(\C)$: the rank 3 case}
\date{\today}

\begin{abstract}
Over the complex field, we show that the principally polarised abelian surfaces with a given refined Humbert invariant of rank 3 are all Galois conjugated.
    \end{abstract}


\author[Lorenzo]{Elisa Lorenzo Garc\'ia}
\address{%
	Elisa Lorenzo Garc\'ia,
   CNRS \& Aix-Marseille Université, Institut de Mathématiques de Marseille, Campus de
Luminy, 13009 Marseille, France. %
}
\email{elisa.lorenzo-garcia@univ-amu.fr}

\author[Ritzenthaler]{Christophe Ritzenthaler}
\address{%
	Christophe Ritzenthaler,
  Univ Rennes, CNRS, IRMAR - UMR 6625, F-35000
 Rennes, %
  France. %
  }
  
\address{%
	Christophe Ritzenthaler,
  Université Côte d'Azur, CNRS, LJAD UMR 7351,
  Nice,
  France
}
\email{christophe.ritzenthaler@univ-rennes1.fr}




\subjclass[2010]{11G20, 14Q05, 14D10, 14D20, 14H25}
\keywords{}

\maketitle

\section{introduction}
 Let $A$ be an abelian surface over $\C$ with a principal polarisation $\lambda=\phi_{\theta}: \,A\rightarrow \hat{A}$, where $\theta \in \NS(A)$ is an ample divisor in the Néron-Severi group of $A$. Let $q_A$ be the positive definite integral quadratic form defined by half the intersection pairing $(\, . \,)$ on $\NS(A)$. Kani introduced another positive definite integral  quadratic form of smaller rank denoted $q_{(A,\lambda)}: \NS(A,\theta):=\NS(A)/\Z\theta \to \Z_{\geq0}$, and called it the \emph{refined Humbert invariant} associated to $(A,\lambda)$. It is defined as $q_{(A,\lambda)}(D)=(D.\theta)^2-2(D.D)$. If we denote by $\Ends(A,\lambda)$ the group of endomorphisms which are stable by the Rosati involution associated to $\lambda$, i.e. $a\mapsto a^\dagger:=\lambda^{-1}\hat{a}\lambda$, one has an isomorphism $\NS(A,\theta) \simeq \Ends(A,\lambda)/\Z \id$ defined by $D \mapsto \lambda^{-1} \phi_D=:a$. Under this identification,  $q_{(A,\lambda)}(a)=\Tr(a^2)-\Tr(a)^2/4$ where $\Tr()$ is the trace of the rational representation of $a$ (see Section 2.3 in \cite{LGRV}). 
 
 In \cite[Sec.5]{kani-abelian}, Kani shows that $\Delta=q_{(A,\lambda)}(D)$ is the (classical) \emph{Humbert invariant} associated to $D$ in the sense of \cite{humbert}. When $0<\Delta \equiv 0,1 \pmod{4}$, Humbert showed that $$H_{\Delta}:=\{(A,\lambda) \in \Ac_2(\C), \; \exists \, a \in\Ends(A,\lambda),\; q_{(A,\lambda)}(a)=\Delta\}$$ is an irreducible hypersurface (see also \cite[IX.2]{geerhilbert}).  This motivated Kani to define the following loci.
\begin{definition}
   Let $(L_i,q_i)$ be two quadratic $\Z$-modules. We say that $(L_1,q_1)$ \emph{primitively represents} $(L_2,q_2)$  if there exists a linear injection $f : L_2 \to L_1$ such that 
   $q_1\circ f=q_2$ and $L_1/f(L_2)$ is torsionfree.
   If $q$ is an integral, positive-definite quadratic form on $\Z^r$, we define the \emph{Humbert locus} associated to $q$ as
   $$H(q):=\{(A,\lambda) \in \mathcal{A}_2(\C): \; q_{(A,\lambda)} \; \textrm{primitively represents} \; q\}.$$
\end{definition}
When the rank of $q$ is 1, i.e. $q(x)=\Delta x^2$, one has $H(q)=H_{\Delta}$ \cite[Sec.2]{kanigeneralized}. More generally, there is a beautiful interplay between the geometry of the Humbert locus $H(q)$ and their intersections in $\Ac_2(\C)$ and the algebra and arithmetic of the associated quadratic forms. In a future article, we will give a general framework and review in more details the various results in the literature. Here, we simply point out to some references which were used during this work: this is far from exhaustive as for instance Kani has many more interesting articles on this topic. For rank 1 and the intersections of Humbert surfaces (at least when one of the $\Delta$'s is a square), see \cite{geer-humbert,kani-abelian,gruenewald,kanigeneralized}; for several cases in rank 2, see  \cite{runge,rotgershimura, kanihumbert,guoyang, linyang}; for rank 3 \cite{kani_rank3, kir, kani_kir}. Despite all this work, many aspects remain to be explored to get a complete characterization of these loci (are they algebraic? Irreducible? What is their moduli description?) and efficient algorithms to manipulate them. To describe the intersections of Humbert surfaces in full generality for instance, one would need to look at Shimura curves with non-Eichler order. In the present article, we focus on the case where $q$ has rank 3, since, as far as we know, even a geometric characterization of this set in $\Ac_2(\C)$ has never been given before. This is our main result.

    Let $q$ be a ternary quadratic form. Assume that $H(q) \ne \emptyset$ over the complex numbers and let  $(A,\lambda)$ be in $H(q)$.
    Since $q$ is a ternary form, $\NS(A)$ has rank 4. Albert's classification \cite[Prop IX.1.2]{geerhilbert} implies that $A$ is isogenous to the square of a CM elliptic curve and a result from Shioda and Mitani (see for instance \cite[Cor.10.6.3]{birkenhake})  implies that $A \simeq E_1 \times E_2$ with $E_1,E_2$ elliptic curves with CM-algebra $K=\Q(\sqrt{d})$ where $d<0$  is a fundamental discriminant.
    \begin{theorem} \label{th:main}
   Let $q$ be a ternary quadratic form over $\Z$ such that $H(q) \ne \emptyset$. Let us assume that the discriminant of $q$ is $-16 d$ where $d<0$ is a fundamental discriminant. Then $H(q)$ consists of the $\Gal(\bar{K}/K)$-orbit of a principally polarised abelian surface $(E_1 \times E_2,\lambda)$ with $E_1,E_2$ elliptic curves with CM by the maximal order $\Oc$ of an imaginary quadratic field $K$ of discriminant $d$. 
    \end{theorem}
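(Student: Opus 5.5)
Since $q$ is ternary, every $(A,\lambda)\in H(q)$ has $q_{(A,\lambda)}$ of rank $3$, and a primitive representation between forms of equal rank is an isometry. Hence $H(q)$ is exactly the set of $(A,\lambda)$ with $q_{(A,\lambda)}\simeq q$. I would first relate $\disc(q_{(A,\lambda)})$ to the structure of $A\simeq E_1\times E_2$. Write $\NS(E_1\times E_2)\simeq \Z^2\oplus\Hom(E_1,E_2)$, where $q_A$ is the hyperbolic plane plus the degree form on $\Hom(E_1,E_2)$. If $E_i$ has CM by the order $\Oc_{f_i}$ of conductor $f_i$, then $\Hom(E_1,E_2)$ is a projective module of rank $2$ over an order of conductor $\mathrm{lcm}(f_1,f_2)$. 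Its degree form therefore has discriminant $-f^2d$ (up to normalisation) with $f$ divisible by that lcm. Kani's formula relating $\disc q_{(A,\lambda)}$ to $\disc q_A$ (the refined Humbert form is essentially the orthogonal complement of $\theta$, rescaled) then gives $\disc q_{(A,\lambda)}=-16\,d\,f^2$. The hypothesis $\disc q=-16d$ with $d$ fundamental forces $f=1$. So $E_1,E_2$ both have CM by the maximal order $\Oc$, and $\Hom(E_1,E_2)\simeq\a$ for an ideal class $[\a]=[\a_2\a_1^{-1}]$ when $E_i=\C/\a_i$.

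Next I would parametrise all principally polarised abelian surfaces $(A,\lambda)$ with $A\simeq E_1\times E_2$ and $E_i$ having CM by $\Oc$. Such $A$ is isomorphic to $\Oc\times\b$ for some ideal $\b$, using Steinitz classes. The principal polarisations then correspond to positive definite unimodular $\Oc$-hermitian forms on $\Oc\oplus\b$, up to $\Aut$. I would compute $q_{(A,\lambda)}$ directly on $\Ends(A,\lambda)/\Z$, viewed as the trace-zero part of the space of hermitian endomorphisms. This identifies $q_{(A,\lambda)}$ with an explicit ternary form attached to the hermitian lattice; essentially $q_{(A,\lambda)}(a)=\Tr(a^2)-\Tr(a)^2/4$ becomes $-\det$ on trace-zero twisted hermitian $2\times 2$ matrices. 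The key claim is that the isometry class of this ternary form determines the hermitian lattice $(\Oc\oplus\b,h)$ up to isomorphism and Galois twisting. Conversely, the map (hermitian lattice) $\mapsto q$ is injective modulo the action of the class group and complex conjugation.

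To conclude, I would use the action of $\Gal(\bar K/K)$. By CM theory, $\sigma$ with Artin symbol $[\mathfrak c]$ sends $\C/\a_i$ to $\C/\mathfrak c^{-1}\a_i$, and correspondingly twists the hermitian lattice by $\mathfrak c$. Since the ternary form only sees $\End$ together with the Rosati involution, it is Galois invariant, so $H(q)$ is a union of Galois orbits. For transitivity, I would show that the fibre of $q$ is a single orbit by a counting or genus argument. Both the orbit of a given $(A,\lambda)$ and the set of hermitian lattices with fixed associated ternary form should have the size dictated by the class group, modulo $\Cl(\Oc)[2]$-type stabilisers. One approach is to use the classical correspondence between ternary forms of discriminant $-16d$ (in a fixed genus) and orders/ideal classes in the quaternion algebra $\End^0(A)=M_2(K)$ with maximal orders; the twisted-hermitian model $\Ht$ suggested by the paper's macros fits this. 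Under that correspondence, isometry of ternary forms corresponds to conjugacy of maximal orders, which is what Galois twists realise.

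The main obstacle is this last injectivity step: two non-Galois-conjugate principally polarised $E_1\times E_2$ might a priori have isometric refined Humbert forms. I would attack it by reconstructing $(\Oc\oplus\b,h)$ from $q$. The even Clifford algebra of $q$ recovers the order $\Ends$ generated structure, namely $\End(A)$ as a maximal order in $M_2(K)$ together with its involution. Rosati-compatible isomorphisms of such pairs then correspond to isomorphisms of principally polarised surfaces up to the choice of embedding $K\hookrightarrow\C$ and ideal twist. That is exactly a $\Gal(\bar K/K)$-orbit, with complex conjugation handled separately if needed.
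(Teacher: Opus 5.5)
Your preliminary reductions are fine and match the paper. Equal ranks turn a primitive representation into an isometry. The relation $\disc q_{(A,\lambda)}=16\disc(q_{E_1,E_2})$ with $\disc(q_{E_1,E_2})=\mathrm{lcm}(f_1,f_2)^2d$ forces both $E_i$ to have CM by $\Oc$. Galois invariance of $q_{(A,\lambda)}$ gives one inclusion.

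The gap is the converse, which is the entire content of the theorem. You state it as ``the key claim'' and ``the main obstacle'' but do not prove it, and the tools you point to do not work as stated:
\begin{itemize}
\item \emph{Counting.} You supply neither count. The orbit size is also not governed by $\Cl(\Oc)$ alone, since the Galois action need not be faithful.
\item \emph{Even Clifford algebra.} The even Clifford algebra of a ternary $\Z$-lattice (the object in the classical ternary-forms/quaternion-orders correspondence) is a rank-$4$ order in a quaternion algebra over $\Q$. It is not the rank-$8$ order $\End(A)\subset M_2(K)$. What recovers $\End^0(A)$ is the \emph{full} Clifford algebra of $V=(\Ends(A,\lambda)/\Z)\otimes\Q$, realised inside $\End^0(A)$ via $a\mapsto 2a-\mathrm{tr}(a)$, so that $q_{(A,\lambda)}(a)=(2a-\mathrm{tr}(a))^2$. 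Its centre is $K$.
\item \emph{Integral recovery.} The ring generated by the image lattice $\Lambda$ lies in $\Z+2\End(A)$, so getting back $\End(A)$ needs an argument you do not give. For instance, $\Ends(A,\lambda)=\{(v+t)/2 : v\in\Lambda,\ t\in\Z,\ t\equiv q(v) \bmod 2\}$, and one checks that $\End(A)$ is the ring generated by $\Ends(A,\lambda)$.
\item \emph{From rings with involution to a Galois orbit.} The claim ``Rosati-compatible isomorphisms correspond to a Galois orbit'' is again the theorem. You need Skolem--Noether to make the ($K$-linear) isomorphism inner. You need that the full lattices stable under the maximal order $\End_{\Oc}(L)$, $L=\Oc\oplus\b$, are exactly the $\mathfrak c L$, which gives $A'\simeq A^{\sigma_{\mathfrak c}}$. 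You need a CM computation showing that $\lambda^{\sigma}$ induces the same Rosati involution on $\End(A^{\sigma})=\End(A)\subset M_2(K)$. Principality then forces $(A',\lambda')\simeq(A^{\sigma},\lambda^{\sigma})$.
\item \emph{Complex conjugation.} You leave this open. As formulated (``modulo the class group and complex conjugation'') you would only get a $\Gal(\bar K/\Q)$-orbit. In the Clifford picture this is repairable: replacing the isometry $f$ by $-f$ conjugates the centre $K$.
\end{itemize}

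The paper proves the converse in two separate steps, and your sketch has a counterpart to neither.

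First, Prop.~\ref{prop:galconj}: Kani--K{\i}r's Theorem~6 puts the degree forms on $\Hom(E_1,E_2)$ and $\Hom(E_1',E_2')$ in the same genus. By the principal genus theorem the corresponding ideal classes then differ by a square $[I]^2$. Since $\sigma_I$ replaces the lattice $\Oc\oplus\b$ by $I^{-1}\oplus I^{-1}\b\simeq\Oc\oplus I^{-2}\b$ (Steinitz), this gives $A'\simeq A^{\sigma}$.

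Second, for fixed $A$: Kani's identification $\bar{\mathcal P}(A,q)\simeq\Sc_\lambda\backslash G_A/H_A$ reduces the possible principal polarisations to $g(\lambda)$, with $g$ among the explicit representatives $\gamma_\a$, $\tau\circ\gamma_\a$ ($[\a]\in\Cl(\Oc)[2]$) of $G_A/H_A$. These come from the $\GL_2(K)$-description of direct isometries and the twisted content. Then $\gamma_\a(\lambda)$ is the pullback of $\lambda^{\sigma}$ along an isomorphism $A\to A^{\sigma}$, with $\sigma$ the Artin symbol of the $2$-torsion class $\a^{-1}$. Finally $\tau(\lambda)\simeq\lambda$ via an explicit automorphism, which is where the orientation issue is absorbed.

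To close the argument you need either this route, or the Clifford route with each ingredient listed above actually written out.
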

\begin{remark}
    Necessary and sufficient conditions for $H(q)$ to be non-empty are given in \cite[Thm.1]{kani_rank3} for the primitive CM case and in  \cite[Thm.1.2]{kir} for the imprimitive CM case.
\end{remark}
 The condition on the discriminant of $q$ in Thm.~\ref{th:main} is equivalent to $E_1$ and $E_2$ having CM by the (same) maximal order $\Oc$. It may be possible to drop the condition on the maximal order, with a bit more work (Sec.~\ref{sec:char} already gives `half' of the proof). Note that the action of the Galois group is not necessarily faithful and the cardinality of $H(q)$ can be smaller than $\# \Cl(\Oc)$: several elements in the orbit may actually be isomorphic as principally polarized abelian varieties, the extreme case of only one element is described in \cite{narbonne}. One may actually define the orbit as a dimension $0$ scheme where each point has the same multiplicity. Using \cite[Theorem~1]{kani_kir}, where a formula for the cardinality of $H(q)$ is given in the case where $(A,\lambda)$ is the Jacobian of a genus 2 curve, we see that this multipliciy is equal to $\#Aut(q_{A,\lambda})/\#Aut(C)$. It would also be interesting to relate our work to  \cite[Sec.3.2]{guoyang}, where principally polarized abelian surfaces with refined Humbert invariant of rank 3 (and their Galois orbits) are considered inside some fixed Shimura curves and where their singular relations are written explicitly.

The main result in Section \ref{sec:char} is Prop. \ref{prop:galconj}, where we prove that if $(A',\lambda') \in H(q_{(A,\lambda)})$, then $A'$ is a Galois conjugate of $A$. Our approach starts with \cite[Th.~6]{kani_kir}, which shows that the equivalence class of the refined Humbert invariant determines the genus-equivalence class of a binary form associated with the ideal describing the isogenies between the two CM elliptic factors of $A$. This implies that these ideals are equal up to a square in a given class group, and consequently, that specific rank-2 modules are isomorphic. Moreover, by using the Artin map to link the action of $\Gal(\overline{K}/K)$ on elliptic curves to an ideal in the class group, we complete the proof by applying the results of \cite{kani_products}, which establishes an equivalence of categories between the isomorphism classes of products of CM elliptic curves and the isomorphism classes of modules over imaginary quadratic orders.

The crux of proving Thm.~\ref{th:main} lies in Prop. \ref{prop:quo}, the proof of which occupies Sec.~\ref{sec:proof}. This proposition establishes a bijection between $\Cl(\mathcal{O})[2] \times \Z/2\Z$ and the finite quotient set $G_A/H_A$, which consists of isometries of $(\NS(A),q_A)$ that map polarizations to polarizations, modulo the natural action of $\Aut(A)$ by pullback. This quotient plays a central role: in \cite[Cor.~15]{kani_genus2}, Kani demonstrates that there is a natural surjective map from this quotient to the set of isomorphism classes of principally polarized abelian surfaces $(A,\mu) \in H(q)$ (for the $A$ which is fixed here). We construct explicit representatives for each coset in $G_A/H_A$. To achieve this, we first explicitly describe $\NS(A)$ as a $\Z$-module $\Ht(\mathcal{O})$ of (twisted) Hermitian matrices. We then provide an elementary proof that the direct isometries of the vector space $\Ht(\mathcal{O}) \otimes K$ can be represented as matrices in $\GL_2(K)$. By analyzing their (twisted) content, we derive criteria ensuring that these matrices also stabilize $\Ht(\mathcal{O})$. The notion of (twisted) content, see eq. (\ref{eq:content}), is applied in this context for the first time, as far as we know, and is probably an interesting tool for further investigations. 

Finally, we conclude Sec.~\ref{sec:charAlambda} by proving Theorem \ref{th:main}. It is straightforward to see that the action of $\sigma \in \Gal(\overline{K}/K)$ on $(A,\lambda)$ preserves $q_{(A,\lambda)}$. Conversely, if $(A',\lambda') \in H(q_{(A,\lambda)})$, Prop.~\ref{prop:galconj} guarantees that $A' \cong A^{\sigma}$ for some $\sigma$. Thus, it only remains to classify the possible principal polarizations $\mu$ on $A$ such that $(A,\mu) \in H(q_{(A,\lambda)})$. The $\mu$'s are obtained via the action of the explicit representatives of $G_A/H_A$ constructed above. Using \cite{narbonne}, we can geometrically reinterpret the action of the ones coming from the $\Cl(\Oc)[2]$ part as the pullback of the polarization $\lambda^{\sigma}$ on a Galois conjugate $A^{\sigma}$ isomorphic to $A$. We furthermore show that the action of the $\Z/2\Z$-factor is trivial. This concludes the proof.

Notice that using the results of \cite{narbonne} and the code of \cite{KNRR}, we have been able to experimentally confirm our theorem for fundamental discriminants up to $d=-199$.

\subsection{Acknowledgment} Both authors have been partially funded by the ANR-Melodia (ANR-20-CE40-0013) to carry out the research of the present work. The authors used  Gemini   large language model for draft arguments in Sec.~\ref{sec:proof}. The
authors checked and revised all of this output and take full responsibility for the
mathematical content. They also thank Ernst Kani and Harun K{\i}r for their careful proof-readings and suggestions which greatly improved our first version.

\section{Characterizing the abelian surfaces in $H(q)$} \label{sec:char}
Let $q$ be an integral quadratic form of rank $3$ such that $H(q) \ne \emptyset$ and let $(A,\lambda) \in H(q)$ be a principally polarised abelian surface over $\mathbb{C}$.  As already mentioned in the introduction, since $q$ is a ternary form, $NS(A)$ has rank 4 and Albert's classification and Shioda and Mitani's result  imply that $A \simeq E_1 \times E_2$ with $E_1,E_2$ elliptic curves with CM-algebra $K=\Q(\sqrt{d})$ where $d<0$ is a fundamental discriminant. Moreover, $H(q)=H(q_{(A,\lambda)})$ because  the two lattices  $(\Z^3,q)$ and $(\Ends(A,\lambda),q_{(A,\lambda)})$ are isometric since they have the same rank and one represents the other primitively.

\begin{proposition} \label{prop:galconj}
 If $(A',\lambda') \in H(q_{(A,\lambda)})$, then $A'$ is a $\Gal(\bar{K}/K)$-Galois conjugate of $A$.
\end{proposition}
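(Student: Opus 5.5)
Since $(A',\lambda')\in H(q_{(A,\lambda)})$, the refined Humbert invariant $q_{(A',\lambda')}$ primitively represents the rank $3$ form $q_{(A,\lambda)}$. It therefore has rank $3$ and is isometric to it, by the argument given just before the statement. In particular $\NS(A')$ has rank $4$. By Albert's classification and Shioda--Mitani, $A'\simeq E_1'\times E_2'$ with $E_i'$ CM elliptic curves. Their CM field is $K$, because the discriminant of $q$ determines $d$.

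Under the hypothesis of Thm.~\ref{th:main}, I also want the $E_i'$ to have CM by the maximal order $\Oc$. The plan is to deduce this from the discriminant $-16d$. The discriminant of $q_{(A,\lambda)}$ is computed from that of $(\NS(A),q_A)$, hence from the conductors of $\End(E_i)$ and of $\Hom(E_1,E_2)$. Since it is an isometry invariant, $A'$ carries the same CM data. So I may write $E_i=\C/\a_i$ and $E_i'=\C/\a_i'$ with $\a_i,\a_i'$ invertible $\Oc$-ideals.

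The key input is \cite[Th.~6]{kani_kir}. It says that the isometry class of the refined Humbert invariant determines the genus of the binary form attached to the ideal class $[\a_1^{-1}\a_2]$, which describes $\Hom(E_1,E_2)$. Equal genus for classes in $\Cl(\Oc)$ means they lie in the same coset of $\Cl(\Oc)^2$. Hence $[\a_1'^{-1}\a_2']=[\a_1^{-1}\a_2][\mathfrak{c}]^2$ for some ideal $\mathfrak{c}$, possibly after swapping the two factors or replacing a class by its inverse. I expect the main obstacle to be exactly this bookkeeping: making sure Kani's genus statement applies to the right class and handling the ambiguity coming from inverses and swaps. Inverses are harmless here, because $[\b]^{-1}=[\b][\b]^{-2}$, so an inverse changes the class only by a square.

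Next I use the Steinitz-type invariant from \cite{kani_products}: $E_1\times E_2\simeq E_1''\times E_2''$ if and only if the $\Oc$-modules $\a_1\oplus\a_2$ and $\a_1''\oplus\a_2''$ are isomorphic, that is, if and only if $[\a_1\a_2]=[\a_1''\a_2'']$. By the Artin map, $\sigma\in\Gal(\bar K/K)$ corresponding to a class $[\b]$ sends $\C/\a$ to $\C/\b^{-1}\a$. Hence $A^\sigma\simeq \C/\b^{-1}\a_1\times\C/\b^{-1}\a_2$, and this has Steinitz class $[\a_1\a_2][\b]^{-2}$.

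It remains to compare Steinitz classes. We have $[\a_1'\a_2']=[\a_1']^2[\a_1'^{-1}\a_2']=[\a_1']^2[\a_1^{-1}\a_2][\mathfrak c]^2$. Also $[\a_1\a_2]=[\a_1]^2[\a_1^{-1}\a_2]$. Therefore $[\a_1'\a_2']=[\a_1\a_2]\cdot(\text{a square})$. Choose $\b$ with $[\b]^{-2}$ equal to that square. Then the Steinitz class of $A^\sigma$ equals that of $A'$, so $A'\simeq A^\sigma$ by \cite{kani_products}.
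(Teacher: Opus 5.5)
\textbf{Verdict: the argument is correct, but only for the maximal-order case, which is a special case of Prop.~\ref{prop:galconj}.}

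In that case your proof follows the paper's route. \cite[Thm.~6]{kani_kir} gives genus equivalence of the degree forms on $\Hom(E_1,E_2)$ and $\Hom(E_1',E_2')$. The principal genus theorem turns this into a square class, the Artin map realises that square, and the module classification of \cite{kani_products} concludes. Tracking the Steinitz class $[\a_1\a_2]$ is a slightly more direct bookkeeping than the paper's normalisation $E_2=E_2'$.

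The gap is that you prove the proposition only under the hypothesis of Thm.~\ref{th:main}, which you import explicitly. The proposition is not stated under that hypothesis. Section~\ref{sec:char} only assumes that $q$ is ternary with $H(q)\neq\emptyset$. So $E_1,E_2$ may have CM by non-maximal orders $\Oc_1\supseteq\Oc_2$ with conductors $f_1\mid f_2$ (after \cite[Thm.~5]{kani_products}). The restriction to $\Oc$ maximal starts only in Section~\ref{sec:charAlambda}, and the introduction says Section~\ref{sec:char} already gives half of the argument without that restriction.

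Outside the maximal case, three of your steps need new input, which the paper supplies.

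\emph{Conductors.} The discriminant $\disc(q_{(A,\lambda)})=16f_2^2d$ determines $d$ and $f_2=\mathrm{lcm}(f_1,f_2)$, but not $f_1$. So ``$A'$ carries the same CM data'' does not follow from the discriminant. The paper obtains $f_1=f_1'$ because the genus-equivalent binary forms have the same content, and this content equals $f_2/f_1$ by \cite[Prop.~40]{kani_products}.

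\emph{Lifting the class.} The genus information is a relation $[\a']=[I_1]^2[\a]$ in $\Cl(\Oc_1)$, for the $\Oc_1$-ideals $\a=I_{E_2}(E_1)$ and $\a'=I_{E_2}(E_1')$. But $\sigma$ acts on the factor $E_2$ through the ring class field of the smaller order, i.e.\ through $\Cl(\Oc_2)$. You must therefore lift $[I_1]$ to an invertible $\Oc_2$-ideal $I_2$, which the paper does with \cite[Cor.~7.17]{coxlib}.

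\emph{Module classification.} Your criterion $E_1\times E_2\simeq E_1''\times E_2''\iff[\a_1\a_2]=[\a_1''\a_2'']$ is Steinitz's theorem for projective modules over a Dedekind domain. Here $A$ corresponds to the $\Oc_2$-module $\a\oplus\Oc_2$, whose first summand is only $\Oc_1$-invertible. You need the Borevich--Faddeev classification (\cite{borevich}, \cite[Thm.~48]{kani_products}) to get $\a I_2\oplus I_2\simeq \a I_2^2\oplus\Oc_2$, and then \cite[Prop.~65]{kani_products} to pass back to abelian surfaces.

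Without these, you have established the maximal-order case, which is all Thm.~\ref{th:main} needs, but not Prop.~\ref{prop:galconj} as stated.
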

\begin{proof}
    
    Let $(A',\lambda')$ be a principally polarised abelian surface with a refined Humbert ternary quadratic form $q_{(A',\lambda')}$ representing $q_{(A,\lambda)}$. Albert's classification implies that this form has also rank 3, hence the two forms are $\GL_3(\Z)$-equivalent. Again with Shioda and Mitani's result, we also have $A' \simeq E_1' \times E_2'$ with $E'_1,E'_2$ elliptic curves with CM-algebra $K=\Q(\sqrt{d'})$ where $d'$ is a fundamental discriminant. By \cite[Thm.5]{kani_products}, we can actually assume that the conductor $f_1$ of $E_1$ (resp. $f_1'$  of $E_1'$) divides the conductor $f_2$ of $E_2$ (resp. $f'_2$ of $E_2'$). Now  by \cite[Prop.5]{kani_kir} and \cite[(75)]{kani_products}, we have that  $$\disc(q_{(A,\lambda)})= 16 \disc(q_{E_1,E_2}) =  16 \textrm{lcm}(f_1,f_2)^2 d =  16 f_2^2 d = \disc(q_{(A',\lambda')}) =  16 \disc(q_{E_1',E_2'})=16 {f_2'}^2 d' $$
    where $q_{E_1,E_2}$ (resp. $q_{E_1',E_2'}$) is a representative of the ($\GL_2(\Z)$-class) of the binary quadratic form $\deg()$ on $\Hom(E_1,E_2)$ (resp. $\Hom(E'_1,E'_2)$).
    Hence, we have that $d=d'$ and $f_2=f_2'$. Again by \cite[Thm.5]{kani_products}, we can then assume that $E_2=E_2'$ (this will simplify the rest of the proof).

    From Theorem 6 in \cite{kani_kir}, we know that the equivalence of the refined Humbert invariants implies the equivalence of the intersection forms $q_A$ and $q_{A'}$. The beginning of the proof of this theorem implies that the binary quadratic forms $q_{E_1,E_2}$ and $q_{E_1',E_2'}$ are $\GL_2(\Z)$-genus-equivalent. Hence, they have the same content. But \cite[Prop.40]{kani_products} shows that the content is equal to $m:=f_2/f_1=f_2'/f_1'=:m'$ hence $f_1=f_1'$ as well. We can therefore write $q_{E_1,E_2} \sim_{\GL_2(\Z)} m \cdot q$ (resp. $q_{E_1',E_2'} \sim_{\GL_2(\Z)} m' \cdot q'$) where $q,q'$ are ($\GL_2(\Z)$-classes of) primitive quadratic forms with discriminant $-f_1^2 d$. By \cite[Prop.40]{kani_products}, $q=q_{\a}$ (resp. $q'=q_{\a'}$) where $\a=I_{E_2}(E_1)$ and $\a'=I_{E_2}(E_1')$ are representatives of classes of $\Oc_1:=\End(E_1)$-invertible ideals (it is so because by \textit{loc.~cit.} formula (73), the multiplicator ring of $\a$ and $\a'$ is $\Oc_1$).
    
    Since $q_{E_1,E_2}$ and $q_{E_1',E_2'}$ are $\GL_2(\Z)$-genus-equivalent with the same content, $q$ and $q'$ are also $\GL_2(\Z)$-genus-equivalent. In the usual group  of $\SL_2(\Z)$-equivalence classes of positive definite quadratic form of discriminant $f_1^2 d$ (which is isomorphic to the class group $\Cl(\Oc_1)$, $\begin{pmatrix}
        1 & 0 \\ 0 & -1
    \end{pmatrix}$ induces the inverse in $\Cl(\Oc_1)$, hence preserves the zero element of this group. By compositing locally with this action, we can assume that  $q$ and $q'$ are actually $\SL_2(\Z)$-genus-equivalent. 
    
    As explained in \cite[Rem.41]{kani_products}, the $\Oc_1$-ideals associated to $q$ and $q'$ are $\a^{\pm 1}$ and ${\a'}^{\pm 1}$, the ambiguity coming from the $\GL_2(\Z)$ vs. $\SL_2(Z)$ orbits. By the Principal Genus Theorem \cite{gauss} applied to $q$ and $q'$, the primitive forms $q$ and $q'$ are $\SL_2(\Z)$-genus-equivalent if and only if there exists an invertible ideal $I_1 \subset \Oc_1$ such that ${\a'}^{\pm 1} \equiv I_1^2 {\a}^{\pm 1}$ in $\Cl(\Oc_1)$. Modify $I_1^2$ by a square if necessary, we can assume, and we will do, that $\a' \equiv I_1^2 \a$.  By \cite[Cor.7.17]{coxlib}, we can then choose the ideal $I_1$ such that there exists $I_2$ in $\Oc_2:=\End(E_2)$ that is invertible and such that $\mu\cdot I_2\cdot\Oc_1=I_1$ for some $\mu\in K^*$. Since $\a=\a \Oc_1$, we get that the ideals $\a'$ and  $I_2^2 \a$ are equivalent modulo principal ideals.


 Let $K_2$ be  the ring class field of $\Oc_2$ and $\sigma \in \Gal(K_2/K)$  the image by Artin map of the invertible ideal $I_2$ of $\Oc_2$. If $E_i \simeq \C/L_i$, then $E_i^{\sigma} \simeq \C/(I_2^{-1} L_i)$ by \cite[Chap.10, Thm.1]{langCM}. Using \cite[Cor.34]{kani_products}, we then get for instance that $$I_{E_2}(E_1^{\sigma})=I_{E_{L_2}}(E_{I_2^{-1} L_1}) \simeq L_2 (I_2^{-1} L_1)^{-1}= (L_2 L_1^{-1}) I_2 \simeq I_{E_2}(E_1) I_2= \a I_2.$$
By \cite[Thm.3]{kani_products}, the $\Oc_2$-module  corresponding to $E_1 \times E_2$ is  $I_{E_2}(E_1) \oplus I_{E_2}(E_2) = I_{E_2}(E_1) \oplus \Oc_2$.
 Similarly, the $\Oc_2$-module corresponding to $E_1^{\sigma} \times E_2^{\sigma}$ is isomorphic to $ \a I_2 \oplus I_2$.  By \cite{borevich} (also \cite[Thm.48]{kani_products}), this module is isomorphic to $\a I_2^2 \oplus \Oc_2 \simeq \a' \oplus \Oc_2= I_{E_2}(E_1') \oplus I_{E_2}(E_2)$. But this is the module characterizing $E_1' \times E_2'$ since $E_2=E_2'$ and by \cite[Prop.65]{kani_products} we get that $E_1' \times E_2' \simeq E_1^{\sigma} \times E_2^{\sigma}$.

\end{proof}

\section{Characterising the $(A,\lambda) \in H(q)$}\label{sec:charAlambda}
To prove Thm.~\ref{th:main}, we now restrict ourselves to the case where $A\simeq E_1 \times E_2$ with $E_1$ and $E_2$ CM elliptic curves by the same maximal order $\Oc$ of an imaginary quadratic field $K$. Analytically, we can write $E_1 \simeq \C/\Oc$ and $E_2 \simeq \C/\b$ for an ideal $\b \subset \Oc$. Then, we have 
\[ \operatorname{End}(A) \simeq \begin{pmatrix} \Oc & \b^{-1} \\ \b & \Oc \end{pmatrix} = \left\{ \begin{pmatrix} a & b \\ c & d \end{pmatrix} \in M_2(K) \;\middle|\; a, d \in \Oc,\, b \in \b^{-1},\, c \in \b \right\}.
\]

Identifying $\hat{E}_1= \mathbb{C}/(\Oc/\sqrt{d})$  and $\hat{E}_2=\mathbb{C}/(\bar{\mathfrak{b}}^{-1}/\sqrt{d})$ by \cite[§ 6.3]{shimura61:_compl_abelian} (see also \cite[Prop.3.14]{gelin} where we will drop the normalization factor $\sqrt{d}$), the principal product polarization $\lambda_0 : A \xrightarrow{\sim} \hat{A}$ is uniquely represented by the diagonal matrix
\[
\lambda_0 = \begin{pmatrix} 1 & 0 \\ 0 & N(\b)^{-1} \end{pmatrix},
\]
where $N()$ is the norm of $K/\Q$.
The Rosati involution $\dag$ maps an endomorphism $P=\begin{pmatrix} a & b \\ c & d \end{pmatrix}$ to 
$$P^\dagger =\lambda_0^{-1} \bar{P}^t \lambda_0=\begin{pmatrix} \bar{a} & \frac{\bar{c}}{N(\mathfrak{b})} \\ \bar{b} N(\mathfrak{b}) & \bar{d} \end{pmatrix}.$$
We can therefore identify $\NS(A)$ with the sublattice $\Ht(\Oc):=\left\{S \in \begin{pmatrix}
    \Oc & \b^{-1} \\ \b & \Oc
\end{pmatrix}, \; S^{\dag}=S\right\}$ of $\End(A)$. Notice that $S \in \Ht(\Oc)$ if and only if  $$S=\begin{pmatrix} x & b \\ \bar{b} N(\b) & y \end{pmatrix} \; \textrm{with} \; x, y \in \mathbb{Z}, b \in \b^{-1}.$$  
Following the computations in \cite[Prop. 23]{kanihumbert} and noticing that $\deg(b)=[b^{-1}\O:\b]=N(b) N(\b)$, the quadratic form $q_A=\frac{1}{2} (.)$ on $\NS(A)$ agrees with $\det(S)$ on $\Ht(\Oc)$.
 
For the rest of the proof, we will use some of the notation and results of \cite{kani_genus2}, which, in the present context, can be made more explicit. In \textit{loc.~cit.} Prop.~8, the group $G_A$ is defined as the subgroup of the orthogonal group $\Aut(q_A):=O(\NS(A),q_A)\simeq O(\Ht(\Oc),\det)$ preserving  the ample cone, i.e. the set of all polarisations. Under this isomorphism,  the ample cone is formed by the elements $S \in \Ht(\Oc)$, such that $\lambda_0 S$ is positive definite, i.e. such that $S$ has positive trace and determinant.

The subgroup $H_A \subset G_A$ is defined as the image of the natural morphism of the geometric automorphism group $\Aut(A)=\{ P \in \End(A), \,  \det(P) \in \O^{\times}\}$ to $\Aut(q_A)$, acting on the elements of $\NS(A)$ by pullback. With the previous matrix representation in mind, this is $P\mapsto h_P(S)=P^{\dag} S P$. \\

We will now define elements of $G_A$ and prove in Sec.~\ref{sec:proof} that they represent each coset $G_A/H_A$. Let $\a \subset \Oc$ be an ideal which class is of 2-torsion in the class group $\Cl(\Oc)$. The two modules $\Oc \oplus \b$ and $\a \oplus \a \b$ are isomorphic by Steinitz \cite{steinitz} since $(1) \cdot \b \equiv \a \cdot \a \b \pmod{\Cl(\O)}$. Let $M_{\a}=\begin{pmatrix}
    \alpha & \beta  \\ \gamma & \delta
\end{pmatrix} \in \GL_2(K)$ be a matrix representation of an $\Oc$-module isomorphism $\Oc \oplus \b \xrightarrow{\sim} \a \oplus \a \b$. Since $\alpha \Oc + \beta \b = \a$ and $\gamma \Oc + \delta \b = \a \b$ we see that $\alpha \in \a$, $\beta \in \a \b^{-1}$, $\gamma \in \a \b$ and $\delta \in \a$. Moreover comparing the volumes of the lattices, we get that $N(\det(M_\a)) N(\Oc) N(\b) = N(a) N(\a \b)$ hence $N(\det(M_{\a}))= N(\a^2)$ hence $(\det(M_\a))= \a^2$.

We define $\gamma_{\a}$ acting on $\Ht(\Oc)$ by:
\begin{equation} \label{eq:gammaa}    
\gamma_{\a}(S) = \frac{1}{N(\a)} M_{\a}^\dagger S M_{\a},
\end{equation}

 \begin{lemma} One has $\gamma_{\a} \in G_A$.
 \end{lemma}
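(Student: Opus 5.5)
We need to show three things: $\gamma_{\a}$ maps $\Ht(\Oc)$ into itself (and bijectively), it preserves the quadratic form $\det$, and it maps the ample cone to itself.

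\emph{Hermitian symmetry and the form.} The Rosati involution is an anti-automorphism of $M_2(K)$ with $(P^\dagger)^\dagger=P$. Hence for $S=S^\dagger$ we get $(M_\a^\dagger S M_\a)^\dagger = M_\a^\dagger S M_\a$, so $\gamma_\a(S)$ is $\dagger$-symmetric. For the determinant, note $\det(P^\dagger)=\overline{\det P}$, since $\lambda_0$ is diagonal and conjugation commutes with $\det$. Thus
\[
\det\gamma_\a(S)=\frac{N(\det M_\a)}{N(\a)^2}\det S=\det S,
\]
using $N(\det M_\a)=N(\a^2)$. So $\gamma_\a$ is a $\Q$-linear isometry of $\Ht(\Oc)\otimes\Q$. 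It is injective, because $M_\a$ is invertible, and therefore bijective on the rational space.

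\emph{Integrality.} This is the main step. Write $S=\begin{pmatrix} x & b\\ \bar b N(\b) & y\end{pmatrix}$. The diagonal entry of $M_\a^\dagger S M_\a$ in position $(1,1)$ is
\[
x\,N(\alpha)+y\,N(\gamma)N(\b)^{-1}+\Tr\bigl(\bar\alpha\, b\,\gamma\bigr).
\]
Since $\alpha\in\a$, $\gamma\in\a\b$ and $b\in\b^{-1}$, we have $N(\alpha)\in N(\a)\Z$. Also $N(\gamma)N(\b)^{-1}\in N(\a)\Z$ and $\bar\alpha b\gamma\in\bar\a\a=N(\a)\Oc$, so the trace lies in $N(\a)\Z$. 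After dividing by $N(\a)$, the entry is an integer. The $(2,2)$ entry is handled the same way using $\beta\in\a\b^{-1}$ and $\delta\in\a$. For the $(1,2)$ entry we must check that it lies in $N(\a)\b^{-1}$. Each of its terms is a product of the shape $\bar\a\cdot(\text{ideal})\cdot\a$, and these combine with $\b^{\pm1}$ and $\overline{\b}=N(\b)\b^{-1}$ to land in $N(\a)\b^{-1}$. This is a routine bookkeeping of ideal memberships. The $(2,1)$ entry then follows from $\dagger$-symmetry.

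Hence $\gamma_\a(\Ht(\Oc))\subseteq\Ht(\Oc)$. An injective isometry of a positive-definite-type lattice into itself preserving $\det$ has index $1$, since the discriminants agree. So $\gamma_\a$ is onto, and $\gamma_\a\in O(\Ht(\Oc),\det)$.

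\emph{Ample cone.} If $\lambda_0S$ is positive definite Hermitian, then
\[
\lambda_0\gamma_\a(S)=\frac{1}{N(\a)}\,\bar M_\a^t(\lambda_0 S)M_\a,
\]
because $\lambda_0M_\a^\dagger=\bar M_\a^t\lambda_0$. This is a congruence of a positive definite Hermitian form by an invertible matrix, hence it is again positive definite. Therefore $\gamma_\a$ preserves the ample cone, and $\gamma_\a\in G_A$.

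The step I expect to be most delicate is the off-diagonal integrality check. There the conjugate ideals $\bar\a$ and $\bar\b$ must be rewritten via $\bar\a\a=N(\a)\Oc$ to obtain exactly the required divisibility by $N(\a)$.
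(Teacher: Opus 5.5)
Your proof is correct. Its core is the same computation as the paper's: entrywise integrality of $\frac{1}{N(\a)}M_\a^\dagger S M_\a$, and preservation of $\det$ via $N(\det M_\a)=N(\a)^2$. Your treatment of the trace term, $\bar\alpha b\gamma\in\bar\a\b^{-1}\a\b=N(\a)\Oc$, is more direct than the paper's. The off-diagonal step you leave as bookkeeping does go through. The four terms $x\bar\alpha\beta$, $\bar\alpha b\delta$, $\bar\gamma\bar b\beta$ and $y\bar\gamma\delta/N(\b)$ lie respectively in $\bar\a\a\b^{-1}$, $\bar\a\b^{-1}\a$, $\bar\a\bar\b\,\bar\b^{-1}\a\b^{-1}$ and $\bar\a\bar\b\a/N(\b)$. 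Each of these equals $N(\a)\b^{-1}$, using $\bar\a\a=N(\a)\Oc$ and $\bar\b=N(\b)\b^{-1}$.

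Your argument differs from the paper's in two places.

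\emph{Ample cone.} The paper cites Kani's Prop.~8 to reduce to a single element: it only checks that $\gamma_\a(\mathrm{Id}_2)$ has positive trace. Behind this is the fact that an isometry of this lattice either preserves or exchanges the two halves of $\{\det>0\}$. Your argument instead writes $\lambda_0\gamma_\a(S)=\frac{1}{N(\a)}\bar M_\a^t(\lambda_0 S)M_\a$ as a congruence of a positive definite Hermitian matrix. This proves directly that every polarization maps to a polarization, with no appeal to the cone structure or to Kani. The paper's check is the special case $S=\mathrm{Id}_2$ of yours.

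\emph{Surjectivity.} You add a discriminant argument showing $\gamma_\a$ is onto $\Ht(\Oc)$, which the paper glosses over; it simply says that preserving $\det$ gives an element of $\Aut(q_A)$. The argument is valid, but your phrase ``positive-definite-type lattice'' is wrong. On $\Ht(\Oc)$ the form is $\det=xy-N(b)N(\b)$, which has signature $(1,3)$, as the Hodge index theorem requires for $\NS(A)$. The discriminant argument only needs nondegeneracy: if $F$ is the matrix of $\gamma_\a$ and $G$ the Gram matrix, then $F^tGF=G$ with $\det G\neq 0$ forces $\det F=\pm1$.
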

 \begin{proof} 
 We first show that $\gamma_\a$  preserves the $\Z$-module $\Ht(\Oc)$.
For $S \in \Ht(\Oc)$, we compute
 $$
 \gamma_{\a}(S)=\frac{1}{N(\a)}\begin{pmatrix}xN(\alpha)+\tr(\bar{\alpha}b\gamma)+yN(\gamma)/N(\b)& \bar{\alpha}x\beta+\bar{\alpha} b\delta+\bar{\gamma}\bar{b}\beta+\bar{\gamma}y\delta/N(\b)\\ {\alpha}x\bar{\beta}N(\b)+{\alpha} \bar{b}\bar{\delta}N(\b)+{\gamma}{b}\bar{\beta}N(\b)+{\gamma}y\bar{\delta} & N(\beta)N(\b)x+N(\b)\tr(\bar{\beta}b\delta)+yN(\delta)\end{pmatrix}.
 $$
 The diagonal elements are in $\Q$, we need to check that they are actually in $\Z$. The element in the position $(2,1)$ is $N(\b)$ times the conjugate of the element in the position $(1,2)$. We need to check that the latest is in $\b^{-1}$.
 Since $\alpha\in\a$ and $\beta\in\a\b^{-1}$, $N(\alpha)\mid N(\a)$ and $N(\a)\mid N(\beta)N(\b)$. Similarly, we get $N(\a)\mid N(\delta)$ and $N(\a)N(\b)\mid N(\gamma)$. From $\gamma\O+\delta\b=\a\b$, we get $\bar{\alpha}\gamma\b^{-1}+\bar{\alpha}\delta\O=\bar{\alpha}\a$ and because $\delta\in\a$, we conclude $\bar{\alpha}\gamma b\in \bar{\a}\a=N(\a)\O$, so $\tr(\bar{\alpha}\gamma b)/N(\a)\in\Z$. Similarly, $N(\b)\tr(\bar{\beta}b\delta)/N(\a)\in\Z$. So the diagonal elements of $\gamma_{\a}(S)$ are integers.
 
 Since $\beta \b=\a$, we get $\bar{\alpha}\beta\b\subset N(\a)\O$, and then $\frac{\bar{\alpha}\beta}{N(\a)}\b\subseteq \O$ which gives $\frac{\bar{\alpha}x\beta}{N(\a)}\in \b^{-1}$. Similar arguments yield $\bar{\gamma}y\delta/(N(\b)N(\a))\in\b^{-1}$. Now, since $\alpha,\delta\in\a$ and $b\in\b^{-1}$, we get that $\frac{\bar{\alpha}\delta b}{N(\a)}\in \b^{-1}$. Finally, since $\beta\in \a\b^{-1}$ and $\gamma\in\a\b$, we conclude that $\frac{\bar{\gamma}\bar{b}\beta}{N(\a)}\in\b^{-1}$.
 So the right upper corner of $\gamma_{\a}(S)$ is in $\b^{-1}$. This proves that $\gamma_\a(\Ht(\Oc)) \subset \Ht(\Oc)$.
 
Since $N(\a)^2=N(\a^2)$, we see that $\gamma_{\a}$  preserves the determinant and hence, it defines an element of $\Aut(q_A)$. Finally, it also preserves the ample cone. As noted in \cite[Prop.8]{kani_genus2}, it is enough to compute for instance $\gamma_{\a}(\textrm{Id}_2)$ and to observe by explicit computation that its trace is positive. 

  \end{proof}
 \begin{remark}
     As we will see later, we can interpret $\gamma_\a(S)$ as the pullback of a polarization on a isomorphic Galois conjugate of $A$ and this gives another proof that it satisfies all the properties.
 \end{remark}

Notice that if $\a'= \mu \a$, $\mu \in K^*$ is another representative of the same ideal class, then $\mu M_{\a}$ defines an isomorphism $\O \oplus \b \to \a' \oplus \a' \b$, which still induces the same $\gamma_{\a}$. Finally, two isomorphisms $\O \oplus \b \to \a \oplus \a \b$ differ by an element of $\Aut(A)$, hence we have a well defined map $\Cl(\O)[2] \to G_A/H_A$ given by $\a \pmod{\Cl(\Oc)} \to \gamma_\a$.

Finally, we define an involution $\tau \in G_A$ by $$\tau\left(\begin{pmatrix}
    x & b \\ \bar{b}N(\b) & y
\end{pmatrix}\right) = \begin{pmatrix}
    y & b \\ \bar{b}N(\b) & x 
\end{pmatrix}.$$ 

\begin{proposition} \label{prop:quo}
The $2 \cdot \# \Cl(\Oc)[2]$ elements $\{\gamma_{\mathfrak{a}}\}$ and $\{\gamma'_{\a} := \tau \circ \gamma_{\a}\}$ where $\a$ spans a set of representatives of $\Cl(\Oc)[2]$ form a complete and distinct set of representatives for $G_A/H_A$. In particular $\# G_A/H_A=2 \# \Cl(\Oc)[2]$ as mentioned in  \cite[(8)]{kani_genus2}. 
\end{proposition}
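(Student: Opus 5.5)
We plan to work entirely inside the quadratic space $V:=\Ht(\Oc)\otimes\Q$ with the form $\det$, and to describe $G_A$ explicitly through the twisted conjugation action of $\GL_2(K)$. The first step is to show that every \emph{direct} isometry of $(V,\det)$, i.e. every element of $SO(V,\det)$, is of the form $S\mapsto \frac{1}{\nu}M^\dagger S M$ with $M\in\GL_2(K)$ and $\nu\in\Q^*$ satisfying $N(\det M)=\nu^2$. We would argue elementarily. $V$ is a $4$-dimensional quadratic space whose even Clifford algebra is (a twist of) $M_2(K)$, so $SO(V)$ is identified with $\{M\in\GL_2(K)\}/K^*$ modulo the norm condition. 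Concretely, we would compare dimensions and verify that the map $M\mapsto h_M$ has kernel $K^*$ and surjects onto $SO(V)$ by producing the reflection products explicitly. The improper isometries are then obtained by composing with $\tau$, which has determinant $-1$ on $V$ (it swaps $x$ and $y$ and fixes $b$).

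Next we take $g\in G_A$ and, replacing $g$ by $\tau\circ g$ if necessary, assume $g$ is direct, so that $g(S)=\frac{1}{\nu}M^\dagger S M$. Since $g$ preserves the ample cone, $\nu>0$ can be normalised after scaling $M$ by $K^*$; this scaling is the source of the ``twisted content'' normalisation. The key step is to translate the condition $g(\Ht(\Oc))=\Ht(\Oc)$ into a condition on $M$. For this we introduce the content of $M$: the fractional ideal $\mathfrak{c}(M)$ generated by the entries of $M$, twisted by $\b$ (namely $\alpha,\delta$, $\beta\b$, $\gamma\b^{-1}$). Testing $g$ on the lattice generators $\mathrm{diag}(1,0)$, $\mathrm{diag}(0,1)$ and $\begin{pmatrix}0&b\\ \bar b N(\b)&0\end{pmatrix}$ for $b\in\b^{-1}$, we would show that integrality forces $\mathfrak{c}(M)\bar{\mathfrak{c}}(M)=\nu\Oc$, together with $(\det M)=\mathfrak{c}(M)^2$. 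Applying the same argument to $g^{-1}$ gives the reverse inclusions. Consequently $M$ defines an $\Oc$-isomorphism $\Oc\oplus\b\to \a\oplus\a\b$ with $\a=\mathfrak{c}(M)$, and $N(\a)=\nu$. Then $\a\bar\a=\nu\Oc$ is principal, and the Steinitz class comparison $\b\equiv\a^2\b$ gives $[\a]\in\Cl(\Oc)[2]$.

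With this in hand, we show that $g=\gamma_{\a}\circ h_P$ for some $P\in\Aut(A)$. Indeed, $M_{\a}^{-1}M$ (after rescaling) is an automorphism of $\Oc\oplus\b$, i.e. it lies in $\Aut(A)$, and the previous remark shows that the choice of representative of $\a$ is irrelevant. This proves that the $\gamma_\a$ and $\tau\circ\gamma_\a$ together cover $G_A/H_A$.

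For distinctness, note that each $h_P$ is direct, while $\tau$ is not, so no $\gamma_\a$ is congruent to any $\gamma'_{\a'}$. If $\gamma_\a H_A=\gamma_{\a'}H_A$, then $M_{\a'}=\lambda M_\a P$ with $\lambda\in K^*$ and $P\in\Aut(A)$, using the kernel computation from the first step. Taking contents gives $\a'=\lambda\a$, so $[\a]=[\a']$. The same argument applies to the $\gamma'$ cosets, since $\tau$ normalises the relevant set. The main obstacle is the integrality analysis in the key step: one must show that the stabiliser of the lattice forces the twisted content to be an invertible ideal with exactly $N(\mathfrak c)=\nu$. We would first try to handle this locally at each prime, where $\Oc_\mathfrak p$ is a PID and $\b$ is principal. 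There one can conjugate to $\b=\Oc$ and reduce to the standard description of the stabiliser of $M_2(\Oc_\mathfrak p)$-Hermitian lattices, namely $\GL_2(\Oc_\mathfrak p)$ up to scalars, and then glue the local results.
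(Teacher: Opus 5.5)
Your plan essentially follows the paper's architecture: split $G_A=G_A^+\sqcup\tau G_A^+$ via $\det_4(\tau)=-1$; write direct isometries as $S\mapsto\frac{1}{\nu}M^\dagger SM$ using products of two reflections, as in Lemma~\ref{lem:autherm} (the dimension count plays no role there); use the twisted content; and compare $M_{\a}^{-1}M\in\Aut(A)$, with $M$ unique up to $K^*$.

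The step you yourself flag as the main obstacle is not closed by what you propose. Testing $g$ on the generators of $\Ht(\Oc)$ gives only the one-sided inclusion $\a\bar{\a}\subseteq\nu\Oc$. Applying the same argument to $g^{-1}$ yields nothing new:
\begin{itemize}
\item $g^{-1}$ corresponds to the pair $(M^{-1},\nu^{-1})$;
\item the adjugate $\tilde{M}$ has the same twisted content as $M$, so $\cont(M^{-1})=(\det M)^{-1}\a$;
\item the resulting inclusion $\a\bar{\a}/N(\det M)\subseteq\nu^{-1}\Oc$ is, since $N(\det M)=\nu^2$, again just $\a\bar{\a}\subseteq\nu\Oc$.
\end{itemize}
So as written you obtain neither $N(\a)=\nu$ nor $(\det M)=\a^2$, and hence not $M(\Oc\oplus\b)=\a\oplus\a\b$. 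The local fallback only relocates the problem. At primes dividing $d$ the local lattice $\Ht(\Oc)\otimes\Z_p$ is not unimodular, so there is no standard stabiliser description to quote, and establishing one is the same computation.

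The missing idea is the trivial bound in the opposite direction, which is how the paper proves Lemma~\ref{lem:content}. Since $\alpha\delta\in\a^2$ and $\beta\gamma\in(\a\b^{-1})(\a\b)=\a^2$, one has $(\det M)\subseteq\a^2$. Hence $\nu^2/N(\a)^2=N(\det M)/N(\a)^2$ is a positive integer, while $\a\bar{\a}\subseteq\nu\Oc$ says $N(\a)/\nu$ is a positive integer. Together these force $N(\a)=\nu$ and $(\det M)=\a^2$. The inclusion $M(\Oc\oplus\b)\subseteq\a\oplus\a\b$ is then an equality by comparing covolumes (Lemma~\ref{lem:image}); no localisation is needed.

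Two smaller corrections:
\begin{itemize}
\item Rescaling $M$ by $\lambda\in K^*$ multiplies $\nu$ by $N(\lambda)>0$, so it cannot adjust the sign of $\nu$. Positivity of $\nu$ is instead forced by the ample cone: $\tr(M^\dagger M)>0$, and $g(I_2)=\frac{1}{\nu}M^\dagger M$ must have positive trace.
\item With $P=M_{\a}^{-1}M$ you get $g=h_P\circ\gamma_{\a}$, not $\gamma_{\a}\circ h_P$, because $h_{M_{\a}P}=h_P\circ h_{M_{\a}}$. If you want $g=\gamma_{\a}\circ h_P$, take $P=MM_{\a}^{-1}$, which also lies in $\Aut(A)$.
\end{itemize}
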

\begin{remark}
    It is tempting to conclude that $G_A/H_A$ is a group isomorphic to a semi-direct product of $\Cl(\Oc)[2]$ and $\Z/2\Z$ but, as we will see in the proof of the result, an essential element, the twisted content of $M_{\a}$ (introduced in \eqref{eq:content}), is likely not a morphism and we cannot easily conclude. 
\end{remark}
The proof is postponed to Sec.~\ref{sec:proof}. Let us show how this allows to conclude the proof of Thm.~\ref{th:main}.
\begin{proof}[Proof of Thm.~\ref{th:main}] The Galois action clearly preserves the characteristic polynomial of an endomorphism of an abelian variety and its Néron-Severi group, so the refined Humbert invariants of two Galois conjugated principally abelian varieties are $\GL_3(\Z)$-equivalent.

  Conversely, by Prop.~\ref{prop:galconj}, if $(A',\lambda') \in H(q)$, we have $A'=E_1^{\sigma} \times E_2^{\sigma}$ for $\sigma \in \Gal(\bar{K}/K)$. The Galois action induces a bijection between the sets, denoted  $\bar{\mathcal{P}}(A,q)$ and $\bar{\mathcal{P}}(A',q)$ in \cite{kani_genus2}, of isomorphism classes of principal polarizations on $A$ and $A'$ with a given refined Humbert invariant $q$. It is therefore enough to show that for any $\mu \in \bar{\mathcal{P}}(A,q)$, there exists $\sigma \in \Gal(\bar{K}/K)$ such that $(A,\mu) \simeq (A^{\sigma},\lambda^{\sigma})$.
  
    As proved by \cite[Cor.15]{kani_genus2}, $\bar{\mathcal{P}}(A,q)$ is isomorphic to the double quotient $\Sc_{\lambda} \setminus G_A / H_A$ where $\Sc_{\lambda}=\{g \in G_A, \; g(\lambda)=\lambda\}$, in particular $\{g(\lambda), \; g \in G_A/H_A\}=\bar{\mathcal{P}}(A,q)$. By Prop.~\ref{prop:quo}, there exists $g \in \bigcup_{\a \in \textrm{Cl}(\Oc)[2]} \{\gamma_{\a}\} \cup \{\gamma'_{\a}\}$ such that  $\mu=g(\lambda)$. It remains to see how $g$ is linked to the Galois action on $\lambda$.
    
 The polarization $\lambda \in NS(A)$ corresponds to a matrix $S_0=\left(\begin{smallmatrix} x & b \\ \bar{b}N(\mathfrak{b}) & y \end{smallmatrix}\right) \in \Ht(\Oc)$ with determinant 1 and positive trace. Let $H$ be the Hilbert class field of $K$. By \cite[Thm.4]{narbonne}, for any $\sigma \in \Gal(H/K)$ associated to an ideal $\a^{-1} \in \Oc$, the polarization $\lambda^{\sigma}$ on $E_1^{\sigma} \times E_2^{\sigma}$ corresponds to the matrix $\frac{1}{N(\a)} S_0$ for the module $\a \oplus \a \Oc$. When the class of $\a$ is of 2-torsion, we have defined an isomorphism $M_\a : \Oc \oplus \b \to  \a \oplus \a \b$, i.e. an isomorphism $\phi_\a : A \to A^{\sigma}$. We therefore have the two corresponding cartesian diagrams
 \begin{center}
    \begin{tikzcd}[column sep=2cm]
        \Oc \oplus \b \arrow[r, "M_\a"] \arrow[d, "\lambda_0 \gamma_\a(S_0)"'] & \a \oplus \a \b \arrow[d, "\lambda_0 \frac{1}{N(\a)} S_0"] \\
        \Oc \oplus \bar{\b}^{-1}  & \bar{\a}^{-1} \oplus  \bar{\a}^{-1} \bar{\b}^{-1} \arrow[l, "\lambda_0 M_a^\dag \lambda_0^{-1}"]
    \end{tikzcd}
    \hspace{2cm} 
    \begin{tikzcd}
        A \arrow[r, "\phi_\a"] \arrow[d, "\phi_\a^{*}(\lambda^{\sigma})"'] & A^{\sigma} \arrow[d, "\lambda^{\sigma}"] \\
        \hat{A}  & \hat{A^{\sigma}} \arrow[l, "\hat{\phi}_\a"]
    \end{tikzcd}
\end{center}
and this gives the geometric interpretation of the polarization $\gamma_\a(S_0)$ as the pullback of $\lambda^\sigma$. 
  
    It remains to deal with  the geometric description of the action of $\tau \in G_A$ on $S_0$. Let $$P=\begin{pmatrix}
        y & b \\ -\bar{b} N(\b) & -x 
    \end{pmatrix} \in \GL_2(K).$$ Because $S_0$ is the matrix of a principal polarization, $\det(S_0)=xy-b \bar{b} N(\b)=1$, hence $\det(P)=-1 \in \O^{\times}$. Moreover, $P \in \End(A)$, thus $P$ is actually an automorphism of $A$. An explicit computation shows that $P^{\dag} S_0 P = \tau(S_0)$\footnote{This does not contradict the fact that $\tau \notin G_A^+$ (with the notation of Sec.~\ref{sec:proof}) because the matrix $P$ depends on $S_0$.}. Hence the isomorphism class of the polarization $\tau(S_0)$ is the same as the one of $S_0$ and the action of $\tau$ is superfluous. 

\end{proof}
\begin{remark}
    This proof shows in particular that $\# \bar{\Pc}(A,q) \leq \#\Cl(\Oc)[2]$. The fact that $\tau$ does not play a role looks a bit magical. It is actually related (by a long detour through various isomorphisms) to \cite[Prop.10]{narbonne} where it is shown that the action of the complex conjugation is also superflous with respect to the action of $\Gal(H/K)$.
\end{remark}

\section{Proof of Proposition~\ref{prop:quo}} \label{sec:proof}
We split the proof in three steps.
    
\subsection{Step 1: Decomposition via orientation}
The lattice $\Ht(\Oc)$ is a free $\mathbb{Z}$-module of rank 4. Any isometry $g \in G_A$ induces a $\mathbb{Z}$-linear transformation on $\Ht(\Oc)$, which can be represented by a $4 \times 4$ integer matrix with determinant that we denote in this representation $\det_4(g) = \pm 1$. 

Let $G_A^+ := \{ g \in G_A : \det_4(g) = +1 \}$ be the subgroup of direct isometries. Choosing a standard basis for $\Ht(\Oc)$:
\[
e_1 = \begin{pmatrix} 1 & 0 \\ 0 & 0 \end{pmatrix}, \quad e_2 = \begin{pmatrix} 0 & 0 \\ 0 & 1 \end{pmatrix}, \quad e_3 = \begin{pmatrix} 0 & w_1 \\ \bar{w_1} N(\b) & 0 \end{pmatrix}, \quad e_4 = \begin{pmatrix} 0 & w_2 \\ \bar{w_2} N(\b) & 0 \end{pmatrix},
\]
where $\b^{-1}= \Z w_1 \oplus \Z w_2$, we see that $\tau(e_1)=e_2$, $\tau(e_2)=e_1$, $\tau(e_3)=e_3$, and $\tau(e_4)=e_4$. Thus, $\tau$ acts as a pure reflection on a $4$-dimensional space, implying $\det_4(\tau) = -1$. Consequently, $\tau \notin G_A^+$, which yields the coset decomposition:
\[
G_A = G_A^+ \sqcup \tau G_A^+.
\]

\subsection{Step 2: defining an injection from $G_A^+/H_A$ to $\Cl(\Oc)[2]$}
Every $g \in G_A^+$ defines a direct isometry  of $(\Ht(\Oc),\det)$ preserving the positivity of the trace. The following lemma is probably well-known and obtained as a particular case of results on Clifford algebras. We give here an elementary proof in the case of (twisted) Hermitian matrices.

\begin{lemma} \label{lem:autherm}
Let $K$ be an imaginary quadratic field and $\b$ an  ideal in its maximal order $\Oc$. Let
$\textrm{Herm}_2^{\textrm{twist}}(K) = \{ S \in M_2(K) \mid S^\dagger = S \}$ be the $K$-vector space containing the lattice $\textrm{Herm}_2^{\textrm{twist}}(\Oc)$
equiped with the quadratic form $\det$. 
Then, the group of direct similitudes $\textrm{GO}^+(\textrm{Herm}_2^{\textrm{twist}}(K))$ is exhaustively described by the action of $\operatorname{GL}_2(K)$, given by:
\[
g(S) = \frac{1}{c}  M^\dagger S M
\]
for some $M \in \operatorname{GL}_2(K)$ and $c \in \mathbb{Q}^\times$.
\end{lemma}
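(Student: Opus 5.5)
The plan is to reduce to ordinary Hermitian matrices, treat isometries first via Cartan--Dieudonné, and then handle the multiplier separately.

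\emph{Reduction to ordinary Hermitian matrices.} Set $T=\lambda_0 S$, with $\lambda_0=\mathrm{diag}(1,N(\b)^{-1})$ as before. Then $S^\dagger=S$ if and only if $\bar T^t=T$. Also $\det T=\det S/N(\b)$, and $\lambda_0 M^\dagger S M=\bar M^t T M$. So it suffices to prove the statement for the $4$-dimensional $\Q$-space $V$ of usual Hermitian matrices over $K$, with the quadratic form $\det$ and the action $T\mapsto \frac1c \bar M^tTM$. First I would check directly that such maps are direct similitudes, with multiplier $N(\det M)/c^2$. For directness, it is enough to check that $\GL_2(K)$ is connected enough: the determinant of the induced $4\times 4$ action is a character. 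On diagonal and elementary matrices it equals a positive norm power, hence it is $+1$ after normalising by $c$.

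\emph{Isometries via reflections.} Write $B(T,U)=\frac12(\tr T\tr U-\tr(TU))$ for the polar form of $\det$ on $V$, and $\mathrm{adj}$ for the adjugate. The key identity I would verify is that, for anisotropic $U\in V$,
\[
s_U(T)=T-\frac{2B(T,U)}{\det U}\,U=-\frac{1}{\det U}\,U\,\mathrm{adj}(T)\,U .
\]
This is a $2\times 2$ matrix identity, proved using Cayley--Hamilton for $U\,\mathrm{adj}(T)$ and $\mathrm{adj}(T)=\tr(T)\mathrm{Id}-T$. Here $\mathrm{adj}$ preserves $V$ since $\bar{\ }^t$ commutes with it. By Cartan--Dieudonné over $\Q$, every element of $O^+(V)$ is a product of an even number of reflections. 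For a pair of reflections, the identity $\mathrm{adj}(XYZ)=\mathrm{adj}(Z)\mathrm{adj}(Y)\mathrm{adj}(X)$ gives
\[
s_U s_W(T)=\frac{1}{\det U\det W}\,U\,\mathrm{adj}(W)\,T\,\mathrm{adj}(W)\,U .
\]
With $M=\mathrm{adj}(W)U$ we have $\bar M^t=U\,\mathrm{adj}(W)$, since $U$ and $\mathrm{adj}(W)$ are Hermitian. So $s_Us_W$ is of the required form with $c=\det U\det W\in\Q^\times$. Products of such maps are again of this form, which settles the isometry case.

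\emph{Similitudes, the main obstacle.} Let $g$ be a direct similitude with multiplier $\mu$. It suffices to find $M_0$ and $t$ with $N(\det M_0)/t^2=\mu$, because then $g$ composed with the inverse of $T\mapsto\frac1t\bar M_0^tTM_0$ is a direct isometry, and the previous paragraph applies. The form $\det$ on $V$ is $\mathbb H\perp(-N_{K/\Q})$: the diagonal part is the hyperbolic plane $xy$, and the off-diagonal part contributes $-N(b)$. I would then show that the similitude multipliers of this form are exactly $N(K^\times)\Q^{\times2}$. Indeed, $\mu q\simeq q$ together with Witt cancellation gives $\langle -\mu, \mu d'\rangle\simeq\langle -1,d'\rangle$, writing $N_{K/\Q}\simeq\langle 1,-d'\rangle$ up to scaling. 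Comparing Hasse invariants and discriminants, this forces $\mu$ to be a norm from $K$ up to squares. Then, choosing $m\in K^\times$ with $N(m)=\mu t^2$, the map $T\mapsto \frac1t\,\mathrm{diag}(1,\bar m)\,T\,\mathrm{diag}(1,m)$ has multiplier $\mu$, so $M_0=\mathrm{diag}(1,m)$ works.

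I expect this multiplier computation, together with the careful bookkeeping of directness for the scaling maps, to be the delicate point. If the Hasse-invariant argument becomes messy, I would instead argue directly: $g(\mathrm{Id})$ is Hermitian with determinant $\mu$, and an isotropic-vector argument shows it is represented as $\frac1t\bar M^tM$.
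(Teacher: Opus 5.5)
Your proof is correct. For isometries it is the paper's own argument: you rewrite a reflection via the adjugate as $T\mapsto -\frac{1}{\det U}U\,\mathrm{adj}(T)\,U$, apply Cartan--Dieudonn\'e, and check that a pair of reflections is $T\mapsto\frac{1}{\det U\det W}\bar M^tTM$ with $M=\mathrm{adj}(W)U$. The passage to ordinary Hermitian matrices through $T=\lambda_0S$ is only cosmetic, since $\dagger$ is equally an anti-involution commuting with the adjugate.

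You genuinely depart from the paper in the step from $\mathrm{SO}$ to $\mathrm{GO}^+$, and there your route is the more careful one. The paper simply asserts that $\mathrm{GO}^+$ is generated by $\mathrm{SO}$ and rational scalar multiplications. But $S\mapsto\lambda S$ with $\lambda\in\Q^\times$ has multiplier $\lambda^2$, so this reaches only similitudes with square multiplier. For instance, with $K=\Q(i)$ and $\b=\Oc$, the map $S\mapsto\mathrm{diag}(1,1-i)\,S\,\mathrm{diag}(1,1+i)$ is a direct similitude of multiplier $2$. Your argument is what the lemma as stated actually requires:
\begin{itemize}
\item by Witt cancellation, the multiplier group of $\mathbb{H}\perp(-N_{K/\Q})$ is $N(K^\times)$;
\item each such multiplier is realised by some $\mathrm{diag}(1,m)$;
\item dividing out by that map leaves a direct isometry.
\end{itemize}
The paper's shortcut is harmless for its own purposes only because the lemma is later applied to isometries, where $c^2=N(\det M)$ (Lemma~\ref{lem:content} and Step 2).

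Two small remarks. First, the Hasse-invariant comparison is unnecessary: $\langle-\mu,\mu d'\rangle\simeq\langle-1,d'\rangle$ already says that $-\mu$ is represented by $\langle-1,d'\rangle$, i.e.\ $\mu=u^2-d'v^2\in N(K^\times)$. Second, in your directness check the $4\times4$ determinant of $T\mapsto\frac1c\bar M^tTM$ is $N(\det M)^2/c^4=\mu^2$. That is the correct notion of ``direct'' for a similitude in dimension $4$, but it equals $1$ only in the isometry case, not ``after normalising by $c$'' in general.
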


\begin{proof}
The symmetric bilinear form associated with the quadratic form $S \mapsto \det(S)$ is $B(S, T) = \frac{1}{2}(\det(S+T) - \det(S) - \det(T))$.
The adjugate matrix (or comatrix) $\tilde{S}$ of $S$ satisfies $\tilde{S} = \operatorname{Tr}(S)I_2 - S$. The space $\textrm{Herm}_2^{\textrm{twist}}(K)$ is stable under the adjugate operator. For any $A, S \in \textrm{Herm}_2^{\textrm{twist}}(K)$, using the fact that $\det(A+S) \textrm{I}_2=(A+S)\tilde{(A+S)}$, we obtain that
\[
A\tilde{S} + S\tilde{A} = \operatorname{Tr}(A\tilde{S})I_2 = 2 B(A,S)I_2.
\]

Let $A \in V$ be a non-isotropic vector (i.e., $\det(A) \neq 0$). The orthogonal reflection $\rho_A$ across the hyperplane orthogonal to $A$ is given by:
\[
\rho_A(S) = S - \frac{2 B(A,S)}{\det(A)}A.
\]
Using the adjugate trace identity, we can rewrite this geometric reflection purely in terms of matrix multiplication:
\[
\rho_A(S) = S - \frac{1}{\det(A)}(A\tilde{S} + S\tilde{A})A = S - \frac{1}{\det(A)}(A\tilde{S}A + S(\det A)I_2) = - \frac{1}{\det(A)} A\tilde{S}A.
\]

By the Cartan-Dieudonn\'e theorem, every isometry in the special orthogonal group $SO(\textrm{Herm}_2^{\textrm{twist}}(K))$ is the product of an even number of reflections. Let us compute the composition of two reflections $\rho_A$ and $\rho_B$ applied to $S$:
\begin{align*}
(\rho_B \circ \rho_A)(S) &= - \frac{1}{\det(B)} B \left( \widetilde{-\frac{1}{\det(A)} A\tilde{S}A} \right) B \\
&= \frac{1}{\det(AB)} B \left( \tilde{A} \tilde{\tilde{S}} \tilde{A} \right) B \\
&= \frac{1}{\det(AB)} (B\tilde{A}) S (\tilde{A}B).
\end{align*}

Define $M = \tilde{A}B \in \operatorname{GL}_2(K)$ then 
$M^\dagger = (\tilde{A}B)^\dagger = B^\dagger \tilde{A}^\dagger = B\tilde{A}$, hence
\[
(\rho_B \circ \rho_A)(S) = \frac{1}{\det(AB)} M^\dagger S M.
\]

Since $\textrm{SO}(\textrm{Herm}_2^{\textrm{twist}}(K))$ is generated by products of pairs of reflections, every element of $\textrm{SO}(\textrm{Herm}_2^{\textrm{twist}}(K))$ is of this form. Finally, the group of direct similitudes $\textrm{GO}^+(\textrm{Herm}_2^{\textrm{twist}}(K))$ is generated by $S\textrm{O}(\textrm{Herm}_2^{\textrm{twist}}(K))$ and global rational scalar multiplications which gives the result.
\end{proof}
\begin{remark}
    A direct computation with a basis of $\Ht(K)$ shows that the unique matrices fixing every element of $\Ht(K)$ up to a scalar are the scalar matrices. Hence $g$ actually defines $M$  uniquely up to a scalar multiple in $K^*$. 
\end{remark}

Using this lemma, we can associate to $g \in G_A^+$ a matrix $M \in \operatorname{GL}_2(K)$ and a rational scalar $c \in \mathbb{Q}_{>0}$ such that:
\[
g(S) = \frac{1}{c} M^\dagger S M, \quad \forall S \in \Ht(\Oc).
\]
Since $g$ is an isometry we have $c^2=N(\det(M))$.

We now characterize the direct isometries of $\Ht(K)$ which fix the lattice $\Ht(\Oc)$. Writing $M= \begin{pmatrix} \alpha & \beta \\ \gamma & \delta \end{pmatrix}$, we define the \textit{twisted content} of $M$, denoted $\cont(M)$, as the fractional ideal:
\begin{equation} \label{eq:content}
\mathfrak{a} = (\alpha) + \beta\b + \gamma \b^{-1} + (\delta).    
\end{equation}

\begin{lemma} \label{lem:content}
We resume with the notation of Lemma~\ref{lem:autherm}. Let  $g : \Ht(K) \to \Ht(K)$ be a direct isometry given by:
\[
g(S) = \frac{1}{c} M^\dagger S M
\]
for some $M  \in \operatorname{GL}_2(K)$ and $c = \sqrt{N(\det (M))} \in \mathbb{Q}_{>0}$. 
Assume that $g$ preserves the lattice $\Ht(\Oc)$. Let $\a$ be the twisted content of $M$. Then the ideal class of $\a$ belongs to $\Cl(\O)[2]$. More precisely $\a^2=(\det(M))$.
\end{lemma}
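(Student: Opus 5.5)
The plan is to compare the two ideals $(\det M)$ and $\a^2$, where $\a=\cont(M)$: first show one contains the other, then show their norms agree, which forces equality. We may write $\b^{-1}=\Z w_1\oplus\Z w_2$ as in Step~1 and keep $M=\left(\begin{smallmatrix}\alpha&\beta\\ \gamma&\delta\end{smallmatrix}\right)$.

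\emph{Easy inclusion.} Since $\alpha,\delta\in\a$ and $\beta\b,\ \gamma\b^{-1}\subseteq\a$, we get $\alpha\delta\in\a^2$ and $\beta\gamma\in(\beta\b)(\gamma\b^{-1})\subseteq\a^2$. Hence $(\det M)\subseteq\a^2$, so $N(\a)^2=N(\a^2)$ divides $N(\det M)=c^2$, i.e. $N(\a)\mid c$.

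\emph{Reverse inequality.} It suffices to show $c\mid N(\a)$, i.e. $\a\bar\a\subseteq c\Oc$. Then $N(\a)^2\ge c^2$, the two ideals have equal norm, and $(\det M)=\a^2$, so the class of $\a$ is $2$-torsion. I would get $c\mid N(\a)$ from the classical fact that the norm form $x\mapsto N(x)$ restricted to a fractional ideal $\a$ has content exactly $N(\a)$. So it is enough to prove that $N(x)/c\in\Z$ for every
\[
x=s\alpha+t_1\beta u_1+t_2\gamma v_1+r\delta,\qquad s,r\in\Oc,\ u_1\in\b,\ v_1\in\b^{-1}.
\]
Expanding $N(x)$ gives four norm terms and six cross terms of the shape $\Tr(x_i\bar x_j)$. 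All of these should be read off from the entries of the images $g(e_1),g(e_2),g(e_3),g(e_4)$, which lie in $\Ht(\Oc)$ by hypothesis.
\begin{itemize}
\item The image $g(e_1)=\frac1c M^\dagger e_1M$ is the rank-one matrix built from the row $(\alpha,\beta)$. Integrality of its entries gives $N(\alpha)/c\in\Z$, $N(\beta)N(\b)/c\in\Z$ and $\bar\alpha\beta/c\in\b^{-1}$.
\item Symmetrically, $g(e_2)$ gives $N(\delta)/c\in\Z$, $N(\gamma)/(cN(\b))\in\Z$ and $\bar\gamma\delta/(cN(\b))\in\b^{-1}$.
\end{itemize}
These control the norm terms and the cross pairs $(\alpha,\beta)$ and $(\gamma,\delta)$, even in the stronger non-traced form. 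For the remaining pairs, the diagonal entries of $g(e_w)$ with $w\in\b^{-1}$ give $\Tr(\bar\alpha w\gamma)/c\in\Z$ and $N(\b)\Tr(\bar\beta w\delta)/c\in\Z$, which are exactly the traced conditions needed for the pairs $(\alpha,\gamma)$ and $(\beta,\delta)$.

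\emph{Main obstacle.} The hardest part will be the last two cross pairs, $(\alpha,\delta)$ and $(\beta,\gamma)$, i.e. showing $\Tr(\bar\alpha\delta r)/c\in\Z$ for $r\in\Oc$, and the analogous statement for $\beta\gamma$. These appear only inside the off-diagonal entry $\frac1c(\bar\alpha w\delta+\bar\gamma\bar w\beta)$ of $g(e_w)$, which is known to lie in $\b^{-1}$. The first thing I would try is to multiply that entry by elements of $\b$, take traces, and eliminate the $\bar\gamma\bar w\beta$ part. For the elimination I would use the identity $N(\det M)=N(\alpha\delta)+N(\beta\gamma)-\Tr(\alpha\delta\bar\beta\bar\gamma)=c^2$ together with the integrality already obtained for $\bar\alpha\beta$ and $\bar\gamma\delta$. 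If this global bookkeeping becomes messy, I would localise at each prime $\mathfrak p$. Locally $\a$ is generated by a single one of $\alpha,\beta u,\gamma v,\delta$ (the one of minimal valuation), and it then suffices to check that the norm of that single generator divided by $c$ is a $\mathfrak p$-adic integer. That check follows directly from the norm conditions obtained from $g(e_1)$ and $g(e_2)$ above, which sidesteps the cross terms entirely.
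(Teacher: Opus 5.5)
Your skeleton is the paper's: prove $\a\bar{\a}\subseteq c\Oc$ from the images of a basis, then compare with $(\det M)\subseteq\a^2$. Recasting $\a\bar{\a}\subseteq c\Oc$ as integrality of $N/c$ on $\a$ is a good idea, since it shows that only \emph{traced} cross terms are needed. Your treatment of the norm terms and of the pairs $(\alpha,\beta)$, $(\gamma,\delta)$, $(\alpha,\gamma)$, $(\beta,\delta)$ is correct.

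The gap is the one you flag: nothing in the proposal actually establishes the conditions for $(\alpha,\delta)$ and $(\beta,\gamma)$. The determinant identity cannot do the elimination. Its mixed term is $\Tr\bigl((\alpha\bar{\beta})(\delta\bar{\gamma})\bigr)$, made of pairs you already control, and $\alpha\bar{\delta}$, $\beta\bar{\gamma}$ do not occur in it. The localisation fallback is false at split primes. If $\mathfrak{p}\neq\bar{\mathfrak{p}}$, then $v_{\mathfrak{p}}(\a\bar{\a})=v_{\mathfrak{p}}(\a)+v_{\bar{\mathfrak{p}}}(\a)$, and the two minima may be attained at different generators $x,y$. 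What you need is then $v_{\mathfrak{p}}(x\bar{y})\ge v_{\mathfrak{p}}(c)$, a cross term which $v_{\mathfrak{p}}(N(x))$ only bounds from above. (The argument is fine at inert and ramified primes.)

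Concretely, take $\b=\Oc$, $K=\Q(i)$, $\pi=2+i$, $M=\mathrm{diag}(\bar{\pi},\pi)$, so $c=5$. Then $g(e_1)=e_1$ and $g(e_2)=e_2$, and your local check passes at every prime (generator $\bar{\pi}$ at $(\pi)$, $\pi$ at $(\bar{\pi})$). Yet $\a=\Oc$ and $\a^2\neq(5)=(\det M)$. What fails is exactly the mixed off-diagonal entry $\pi^2/5$ of $g(e_w)$ for $w=1$, so this condition cannot be sidestepped.

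The missing idea is that $w$ may be replaced by $\lambda w$ for any $\lambda\in\Oc$. Write $A_w=\bar{\alpha}w\delta$ and $B_w=\bar{\gamma}\bar{w}\beta$. Then $\lambda A_w+\bar{\lambda}B_w\in c\b^{-1}$, and also $\lambda(A_w+B_w),\ \bar{\lambda}(A_w+B_w)\in c\b^{-1}$. Hence $(\lambda-\bar{\lambda})A_w$ and $(\lambda-\bar{\lambda})B_w$ lie in $c\b^{-1}$. Take $\lambda=(d+\sqrt{d})/2$ and multiply by elements of $\b$; this gives $\alpha\bar{\delta}\in\frac{c}{\sqrt{d}}\Oc$ and $\beta\bar{\gamma}\,\b\bar{\b}^{-1}\subseteq\frac{c}{\sqrt{d}}\Oc$. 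Since $\frac{1}{\sqrt{d}}\Oc$ is the trace dual of $\Oc$, these say precisely that $\Tr(\alpha\bar{\delta}r)/c\in\Z$ for $r\in\Oc$ and $\Tr(\beta\bar{\gamma}y)/c\in\Z$ for $y\in\b\bar{\b}^{-1}$. These are the two conditions your reduction requires, and your proof is then complete.

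The paper passes over this very point: its relation (8) simply asserts the separation, and (7) upgrades a trace condition to an ideal containment. Your norm-form formulation has the advantage of needing only the statements up to the different that this trick delivers.
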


\begin{proof} We start by proving that
$\mathfrak{a}\bar{\mathfrak{a}} \subseteq c\Oc.$
Evaluating $g$ on the diagonal basis elements of $\Ht(\Oc)$,  $\left(\begin{smallmatrix} 1 & 0 \\ 0 & 0 \end{smallmatrix}\right)$ and $\left(\begin{smallmatrix} 0 & 0 \\ 0 & 1 \end{smallmatrix}\right)$, we get the relations
\begin{enumerate}
    \item $\alpha\bar{\alpha} \in c\mathbb{Z} \implies (\alpha)(\bar{\alpha}) \subseteq c\Oc$.
    \item $\bar{\alpha}\beta \in c\b^{-1} \implies (\beta\b) (\bar{\alpha}) \subseteq c\Oc \implies (\bar{\beta}\bar{\b}) ({\alpha}) \subseteq c\Oc $.
    \item $\beta \bar{\beta} N(\b)\in c \mathbb{Z} \implies (\beta \b) (\overline{\beta \b}) \in c\Oc$.
     \item $\delta\bar{\delta} \in c\mathbb{Z} \implies (\delta)(\bar{\delta}) \subseteq c\Oc$.
    \item $\frac{\gamma\bar{\gamma}}{N(\b)} \in c\mathbb{Z} \implies (\gamma\b^{-1})(\bar{\gamma} \overline{\b^{-1}}) \subseteq c\Oc $.
   \item $\bar{\delta} \gamma \in c\b \implies (\gamma \b^{-1}) (\bar{\delta}) \subseteq c\Oc \implies (\bar{\gamma} \bar{\b}^{-1}) ({\delta}) \subseteq c\Oc$.
\end{enumerate}
By evaluating $g$ on the off-diagonal elements $\left(\begin{smallmatrix} 0 & w \\ \bar{w} N(\b) & 0 \end{smallmatrix}\right)$ for $w \in \b^{-1}$, we get that
\begin{enumerate}[resume]
    \item $\bar{\alpha}w\gamma+\bar{\gamma}\bar{w}\alpha \in c\mathbb{Z} \implies (\bar{\alpha}\gamma)\b^{-1}+(\alpha\bar{\gamma})\bar{\b}^{-1}\in c\O$.
    \item $\bar{\alpha}w\delta+\bar{\gamma}\bar{w}\beta \in c\b^{-1} \implies (\bar{\alpha}\delta)+(\bar{\gamma}\beta)\bar{\b}^{-1}\b\in c\O\implies ({\alpha}\bar{\delta})+({\gamma}\bar{\beta})\b^{-1}\bar{\b}\in c\O$.
     \item $(\bar{\beta}w\delta+\bar{\delta}\bar{w}\beta) N(\b)\in c \mathbb{Z} \implies (\bar{\beta}\delta)\bar{\b}+(\bar{\delta}\beta)\b\in c\O$.
\end{enumerate}
This completes the bilinear relations, proving that the product of any generator of $\mathfrak{a}$ with any generator of $\bar{\mathfrak{a}}$ is contained in $c\Oc$. 
Hence, $N(\mathfrak{a}) \ge c$.

Let us now look at $\det(M) = \alpha\delta - \beta\gamma$. 
By definition of the twisted content $\mathfrak{a}$:
\begin{itemize}
    \item $\alpha \in \a$ and $\delta \in \a$, which implies their product $\alpha\delta \in \a^2$.
    \item $\beta \in \a\b^{-1}$ and $\gamma \in \a\b$, which implies their product $\beta\gamma \in (\a\b^{-1})(\a\b) = \mathfrak{a}^2$.
\end{itemize}
Thus, $\det (M) \in \mathfrak{a}^2$ and therefore 
$c^2= N(\det (M)) \geq N(\mathfrak{a}^2)$.
Since we  have seen that  $N(\mathfrak{a}) \ge c$, this implies $(\det (M)) = \mathfrak{a}^2$, i.e. the class of $\a$ is a 2-torsion element of $\textrm{Cl}(\Oc)$.
\end{proof}

\begin{lemma} \label{lem:image}
Let $M=\begin{pmatrix}
    \alpha & \beta \\ \gamma & \delta
\end{pmatrix} \in \GL_2(K)$ be a matrix with twisted content $\a$ such that $(\det(M))=\a^2$. Then $M(\Oc \oplus \b) = \a \oplus \a \b$. Note that then $\cont(M)$ is actually the unique ideal such that $M(\Oc \oplus \b) = \cont(M)(\Oc \oplus \b)$.
\end{lemma}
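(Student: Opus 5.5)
We prove $M(\Oc\oplus\b)\subseteq \a\oplus\a\b$ directly from the definition of the twisted content, and then obtain equality by comparing indices (covolumes) using the hypothesis $(\det M)=\a^2$. Throughout, $M$ acts on column vectors $(u,v)^t$ with $u\in\Oc$ and $v\in\b$, so
\[
M\begin{pmatrix}u\\ v\end{pmatrix}=\begin{pmatrix}\alpha u+\beta v\\ \gamma u+\delta v\end{pmatrix}.
\]
This is the convention under which $M_\a$ was used earlier, where $\alpha\Oc+\beta\b=\a$.

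\emph{Inclusion.} By definition $\a=(\alpha)+\beta\b+\gamma\b^{-1}+(\delta)$, so $\alpha\in\a$, $\beta\b\subseteq\a$, $\gamma\in\a\b$ and $\delta\in\a$. Hence for $u\in\Oc$ and $v\in\b$ we have $\alpha u+\beta v\in\a$, and $\gamma u+\delta v\in\a\b+\a\b=\a\b$. Therefore $M(\Oc\oplus\b)\subseteq\a\oplus\a\b$.

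\emph{Index comparison.} Both sides are $\Oc$-lattices of rank $2$ in $K^2$, and $\det(M)\neq 0$. For the $\Oc$-linear map $M$, the $\Oc$-module index (Steinitz class) gives
\[
[\a\oplus\a\b : M(\Oc\oplus\b)]_{\Oc}=(\det M)\cdot\b\cdot(\a\cdot\a\b)^{-1}.
\]
Equivalently, the covolume of $M(L)$ equals $N(\det M)$ times the covolume of $L$, exactly as in the volume computation done earlier for $M_\a$. Here $\operatorname{covol}(\Oc\oplus\b)$ is proportional to $N(\b)$ and $\operatorname{covol}(\a\oplus\a\b)$ is proportional to $N(\a)^2N(\b)$. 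Since $(\det M)=\a^2$, we get $N(\det M)=N(\a)^2$, so the two lattices have the same covolume. A sublattice of equal covolume is the whole lattice, which gives $M(\Oc\oplus\b)=\a\oplus\a\b$. The only point needing care is the normalisation of the covolumes, i.e.\ that the real determinant of multiplication by $\det M$ on $K^2\simeq\mathbb{R}^4$ is $N(\det M)$. This is standard: for $\mu\in K$, multiplication by $\mu$ on $K$ has real determinant $N(\mu)$, and the real determinant of an $\Oc$-linear map is the norm of its $K$-determinant.

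\emph{Uniqueness.} Suppose $M(\Oc\oplus\b)=\a'(\Oc\oplus\b)$ for some fractional ideal $\a'$. Then $\a'=\a'\Oc$ is the first-coordinate projection of $M(\Oc\oplus\b)$, namely $\alpha\Oc+\beta\b$. On the other hand, $\a$ is also that first projection, since equality was just shown. So $\a'=\a$.

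I expect no real obstacle. The substantive input is the hypothesis $(\det M)=\a^2$, which Lemma~\ref{lem:content} supplies in the application.
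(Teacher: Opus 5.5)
Your proof is correct and follows the same route as the paper. You read off the inclusion $M(\Oc\oplus\b)\subseteq\a\oplus\a\b$ from the definition of the twisted content, then get equality by comparing covolumes using $N(\det M)=N(\a)^2$. Your first-coordinate-projection argument for the uniqueness of $\cont(M)$ fills in a point the paper only states.
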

\begin{proof}
  The resulting $\Oc$-module is:
\[
 M \begin{pmatrix} \Oc \\ \mathfrak{b} \end{pmatrix} = \begin{pmatrix} \alpha\Oc + \beta\mathfrak{b} \\ \gamma\Oc + \delta\mathfrak{b} \end{pmatrix}.
\]
 By definition of ideal addition, we immediately obtain the strict inclusions:
\begin{itemize}
    \item $\alpha \in \mathfrak{a}$ and $\beta\b  \subseteq \mathfrak{a} \implies \alpha\Oc + \beta\mathfrak{b} \subseteq \mathfrak{a}$.
    \item $\gamma\b^{-1} \subseteq \a$ and $\delta \in \a \implies \gamma\Oc + \delta\mathfrak{b} \subseteq \mathfrak{a}\mathfrak{b}$.
\end{itemize}
This implies that $M(\Oc \oplus \b) \subseteq \mathfrak{a} \oplus \mathfrak{a}\mathfrak{b}$. 

To prove equality, we compare the volumes (or absolute norms) of these lattices. The norm of the target module is $N(\mathfrak{a}) \cdot N(\mathfrak{a}\mathfrak{b}) = N(\mathfrak{a})^2 N(\mathfrak{b})$. 
On the other hand, the volume scaling factor of the linear map $M$ is given by $N(\det (M))= N(\mathfrak{a})^2$. The norm of the image lattice is therefore $N(\det (M)) \cdot N(\Oc \oplus \b) = N(\mathfrak{a})^2 N(\mathfrak{b})$. This shows the equality.
\end{proof}

Recall that we started with $g \in G_A^+$ and thanks to Lemmas~\ref{lem:autherm} and \ref{lem:content}, we have a matrix $M \in \GL_2(K)$, defined uniquely up to a scalar, such that $g(S)=\frac{1}{c} M^{\dag} S M$. The matrix $M$ has twisted content $\a$ with $\a^2=(\det (M))$. This therefore induces a well-defined map $\phi: G_A^+ \to \Cl(\Oc)[2]$. If $P \in \Aut(A)$, it stabilizes $\Oc \oplus \b$, hence $MP(\Oc \oplus \b)=M(\Oc \oplus \b)$ and by Lemma~\ref{lem:image}, $\cont(MP)=\cont(M)$. The twisted content is invariant on $M\cdot\Aut(A)$. Thus, the map $\phi$ induces a map $\tilde{\phi} : G_A^+/H_A \to \Cl(\Oc)[2]$.
\begin{lemma}
The map $\tilde{\phi}$ is injective.    
\end{lemma}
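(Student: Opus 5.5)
Suppose $g_1,g_2\in G_A^+$ have the same image under $\tilde{\phi}$. We want to show that $g_1$ and $g_2$ lie in the same coset of $H_A$. By Lemma~\ref{lem:autherm} we write $g_i(S)=\frac{1}{c_i}M_i^\dagger S M_i$, and by Lemma~\ref{lem:content} the twisted contents satisfy $\a_i^2=(\det M_i)$ with $\a_1\equiv\a_2$ in $\Cl(\Oc)$. Rescaling $M_2$ by some $\mu\in K^*$ changes neither $g_2$ (the factor $N(\mu)$ is absorbed into $c_2$) nor the class of its content. We may therefore assume $\a_1=\a_2=:\a$.

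Lemma~\ref{lem:image} then gives $M_1(\Oc\oplus\b)=\a\oplus\a\b=M_2(\Oc\oplus\b)$. Set $P:=M_2^{-1}M_1\in\GL_2(K)$. Then $P$ stabilises the lattice $\Oc\oplus\b$, so $P\in\End(A)$, and the same holds for $P^{-1}$. Hence $P\in\Aut(A)$, equivalently $\det P\in\Oc^\times$. Since $M_1=M_2P$,
\[
g_1(S)=\frac{1}{c_1}P^\dagger M_2^\dagger S M_2 P=\frac{c_2}{c_1}\,h_P\bigl(g_2(S)\bigr).
\]
From $c_i^2=N(\det M_i)$ and $N(\det P)=1$ we get $c_1=c_2$, because both are positive. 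Thus $g_1=h_P\circ g_2$.

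This shows that $g_1$ and $g_2$ are congruent modulo $H_A$ acting on the left. The map $\tilde{\phi}$ was defined via right multiplication $M\mapsto MP$, which corresponds to $g\mapsto g\circ h_P$. So we should instead use $P':=M_1M_2^{-1}$, or equivalently treat the cosets as right cosets. The main obstacle is matching this convention carefully. The tool is Steinitz uniqueness: two isomorphisms $\Oc\oplus\b\to\a\oplus\a\b$ differ by an automorphism of the source. Concretely, $M_2^{-1}M_1$ is an automorphism of $\Oc\oplus\b$, so $M_1=M_2P$ and $g_1=g_2\circ h_P$ when $h_P(S)=P^\dagger SP$ is applied first. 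Indeed,
\[
\frac{1}{c}(M_2P)^\dagger S(M_2P)=\frac{1}{c}P^\dagger\bigl(M_2^\dagger SM_2\bigr)P,
\]
which is $h_P\circ g_2$ as maps. Either composition order places $g_1$ and $g_2$ in the same $H_A$-coset; we just need to fix the side (left or right cosets) consistently with the definition of $G_A/H_A$.

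A final check is that $\pm$ sign or orientation issues do not arise. Every $h_P$ with $P\in\Aut(A)$ lies in $G_A^+$ by Lemma~\ref{lem:autherm}, since it has the form $M^\dagger SM$. Hence the cosets live inside $G_A^+$, as required. We also use the remark following Lemma~\ref{lem:autherm} that $M$ is determined by $g$ up to $K^*$, so that $\phi$ and the rescaling argument above are well defined.
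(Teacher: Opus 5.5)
Your argument is the paper's argument. You rescale so that both matrices have the same twisted content $\a$, use Lemma~\ref{lem:image} to see that $P=M_2^{-1}M_1$ preserves $\Oc\oplus\b$ and hence lies in $\Aut(A)$, and compute $g_1=h_P\circ g_2$, which is exactly the paper's conclusion $g'=h_P\circ g$.

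The left/right discussion that follows is unnecessary and partly wrong. Since $(MP)^\dagger=P^\dagger M^\dagger$, the substitution $M\mapsto MP$ corresponds to $g\mapsto h_P\circ g$, not to $g\mapsto g\circ h_P$. That is precisely the $H_A$-action under which $\tilde\phi$ was shown to be well defined, so your second paragraph already finishes the proof. Your later claim that $g_1=g_2\circ h_P$ ``with $h_P$ applied first'' contradicts your own display.

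If you did want the other side, you would need $M_1M_2^{-1}\in\Aut(A)$. This is true, because $M_1M_2^{-1}$ preserves $\a(\Oc\oplus\b)$ and hence, by $K$-linearity, preserves $\Oc\oplus\b$; but you only asserted it.

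Similarly, Lemma~\ref{lem:autherm} only says that direct similitudes have the form $\frac{1}{c}M^\dagger S M$, not the converse. So it does not by itself give $h_P\in G_A^+$, although the inclusion is true and the paper also takes it for granted.
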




\begin{proof}
    Let $g,g' \in G_A^+$ with associated matrices $M,M' \in \GL_2(K)$ and twisted contents in the same class of $\Cl(\Oc)[2]$. Because $M,M'$ are only defined up to a scalar, we can actually assume that $M$ and $M'$ have the same twisted content $\a$. By Lemma~\ref{lem:image}, since $M,M'$ realizes isomorphisms between the same modules, the matrix $P = M^{-1} M'$ stabilizes the module $\Oc \oplus \b$ so $P \in \End(A)$. Furthermore, its determinant is a unit hence $P \in \Aut(A)$.
By substituting $M' = M P$ into the algebraic expression of the isometry $g'$, we obtain: 
\begin{align*}
g'(S) &= \frac{1}{c'} {M'}^\dagger S M' \\
&= \frac{1}{c \sqrt{N(\det(P))}} (M P)^\dagger S (M P) \\
&= P^\dagger \left( \frac{1}{c} M^\dagger S M \right) P \\
&= P^\dagger g(S) P.
\end{align*}
Therefore, $g' = h_P \circ g$, i.e they define the same coset in $G_A^+/H_A$.
\end{proof}

\subsection{Step 3: Conclusion}
For each representative $\a$ of a $2$-torsion ideal class, we have defined $\gamma_{\a} \in G_A^+/H_A$ in \eqref{eq:gammaa} through its matrix $M_{\mathfrak{a}}$. Since $\alpha \Oc + \beta \b = \a$ and $\gamma \Oc + \delta \b = \a \b$, $\cont(M_{\a}) = \a$ so the injective map $\tilde{\phi}$ is actually a bijection.
Thus, the sets $\{ \gamma_{\a} \}$ and $\{ \tau \circ \gamma_{\a} \}$ are completely disjoint modulo $H_A$, their elements are pairwise non-equivalent and their union generates $G_A/H_A$. In particular $\#(G_A/H_A)= 2 \cdot \#\Cl(\Oc)[2]$.

\bibliographystyle{alpha}
\bibliography{synthbib}

@article{steinitz,
 author = {Steinitz, Ernst},
 title = {Zur {Theorie} der {Moduln}.},
 fjournal = {Mathematische Annalen},
 journal = {Math. Ann.},
 issn = {0025-5831},
 volume = {52},
 pages = {1--57},
 year = {1899},
 language = {German},
 doi = {10.1007/BF01445346},
 url = {https://eudml.org/doc/157931},
 zbMATH = {2667517},
 JFM = {30.0097.01}
}

@book{langCM,
 author = {Lang, Serge},
 title = {Elliptic functions. {Second} edition},
 fseries = {Graduate Texts in Mathematics},
 series = {Grad. Texts Math.},
 issn = {0072-5285},
 volume = {112},
 isbn = {0-387-96508-4},
 year = {1987},
 publisher = {Springer, Cham},
 language = {English},
 zbMATH = {3995866},
 Zbl = {0615.14018}
}

@book{birkenhake,
 author = {Birkenhake, Christina and Lange, Herbert},
 title = {Complex abelian varieties},
 edition = {2nd augmented ed.},
 fseries = {Grundlehren der Mathematischen Wissenschaften},
 series = {Grundlehren Math. Wiss.},
 issn = {0072-7830},
 volume = {302},
 isbn = {3-540-20488-1},
 year = {2004},
 publisher = {Berlin: Springer},
 language = {English},
 zbMATH = {2120946},
 Zbl = {1056.14063}
}

@article{narbonne,
 author = {Narbonne, Fabien},
 title = {Polarized products of elliptic curves with complex multiplication and field of moduli {{\(\mathbb{Q}\)}} (with an appendix by {Francesc} {Fit{\'e}} and {Xavier} {Guitart})},
 fjournal = {Polynesian Journal of Mathematics},
 journal = {Polyn. J. Math.},
 issn = {3075-3422},
 volume = {1},
 pages = {32},
 note = {Id/No 5},
 year = {2024},
 language = {English},
 doi = {10.69763/polyjmath.1.5},
 zbMATH = {8194668}
}

@article{geer-humbert,
 author = {van der Geer, Gerard},
 title = {On the geometry of a {Siegel} modular threefold},
 fjournal = {Mathematische Annalen},
 journal = {Math. Ann.},
 issn = {0025-5831},
 volume = {260},
 pages = {317--350},
 year = {1982},
 language = {English},
 doi = {10.1007/BF01461467},
 url = {https://eudml.org/doc/163672},
 zbMATH = {3741555},
 Zbl = {0473.14017}
}

@misc{LGRV,
 author = {Elisa {Lorenzo Garc{\'{\i}}a} and Christophe Ritzenthaler and Fernando Rodr{\'{\i}}guez Villegas},
 title = {An arithmetic intersection for squares of elliptic curves with complex multiplication},
 year = {2024},
 howpublished = {Preprint, {arXiv}:2412.08738 [math.{NT}] (2024)},
 url = {https://arxiv.org/abs/2412.08738},
 arXiv = {arXiv:2412.08738}
}

@article{kir,
 author = {K{\i}r, Harun},
 title = {The classification of the refined {Humbert} invariant for curves of genus 2},
 fjournal = {International Journal of Number Theory},
 journal = {Int. J. Number Theory},
 issn = {1793-0421},
 volume = {21},
 number = {6},
 pages = {1247--1279},
 year = {2025},
 language = {English},
 doi = {10.1142/S1793042125500654},
 zbMATH = {8054356},
 Zbl = {1567.11073}
}

@article{kani_products,
 author = {Kani, Ernst},
 title = {Products of {CM} elliptic curves},
 fjournal = {Collectanea Mathematica},
 journal = {Collect. Math.},
 issn = {0010-0757},
 volume = {62},
 number = {3},
 pages = {297--339},
 year = {2011},
 language = {English},
 doi = {10.1007/s13348-010-0029-1},
 zbMATH = {5968124},
 Zbl = {1237.11027}
}

@misc{kani_rank3,
 author = {Kani, Ernst},
 title = {The Refined Humbert Invariant for
Abelian Product Surfaces with Complex
Multiplication},
 year = {2021},
 url = {https://mast.queensu.ca/~kani/papers/},
}

@misc{kani_genus2,
 author = {Kani, Ernst},
 title = {Curves of Genus 2 on Abelian Surfaces},
 year = {2025},
 url = {https://mast.queensu.ca/~kani/papers/},
}

@misc{kani_kir,
 author = {Kani, Ernst and K{\i}r, Harun},
 title = {The Number of Curves of Genus 2 with a Given Refined Humbert Invariant},
 year = {2025},
 url = {https://mast.queensu.ca/~kani/papers/},
}

@article{runge,
 author = {Runge, Bernhard},
 title = {Endomorphism rings of abelian surfaces and projective models of their moduli spaces},
 fjournal = {T{\^o}hoku Mathematical Journal. Second Series},
 journal = {T{\^o}hoku Math. J. (2)},
 issn = {0040-8735},
 volume = {51},
 number = {3},
 pages = {283--303},
 year = {1999},
 language = {English},
 doi = {10.2748/tmj/1178224764},
 zbMATH = {1389721},
 Zbl = {0972.14017}
}

@misc{kanigeneralized,
 author = {Ernst Kani},
 title = {Generalized Humbert Schemes and
Intersections of Humbert Surfaces},
 year = {2019},
 url = {https://mast.queensu.ca/~kani/papers/interHum11.pdf},
}

@incollection{rotgershimura,
 author = {Rotger, Victor},
 title = {Shimura curves embedded in {Igusa}'s threefold},
 booktitle = {Modular curves and Abelian varieties. Based on lectures of the conference, Bellaterra, Barcelona, July 15--18, 2002},
 isbn = {3-7643-6586-2},
 pages = {263--276},
 year = {2004},
 publisher = {Basel: Birkh{\"a}user},
 language = {English},
 zbMATH = {2164186},
 Zbl = {1071.11035}
}

@misc{guoyang,
 author = {Guo, Jia-Wei and Yang, Yifan},
 title = {Class number relations arising from intersections of {Shimura} curves and {Humbert} surfaces},
 year = {2019},
 howpublished = {Preprint, {arXiv}:1903.07225 [math.{NT}] (2019)},
 url = {https://arxiv.org/abs/1903.07225},
 arXiv = {arXiv:1903.07225}
}

@article{linyang,
 author = {Lin, Yi-Hsuan and Yang, Yifan},
 title = {Quaternionic loci in {Siegel}'s modular threefold},
 fjournal = {Mathematische Zeitschrift},
 journal = {Math. Z.},
 issn = {0025-5874},
 volume = {295},
 number = {1-2},
 pages = {775--819},
 year = {2020},
 language = {English},
 doi = {10.1007/s00209-019-02372-z},
 zbMATH = {7203139},
 Zbl = {1468.11137}
}

@Article{humbert,
 Author = {Humbert, Georges},
 Title = {Sur les fonctions ab{\'e}liennes singuli{\`e}res. {Premier} {M{\'e}moire}.},
 FJournal = {Journal de Math{\'e}matiques Pures et Appliqu{\'e}es. 5. S{\'e}rie},
 Journal = {Journ. de Math. (5)},
 Volume = {5},
 Pages = {233--350},
 Year = {1899},
 Language = {French},
 URL = {https://eudml.org/doc/234233},
 zbMATH = {2668426},
 JFM = {30.0408.04}
}

@Article{KNRR,
 Author = {Kirschmer, Markus and Narbonne, Fabien and Ritzenthaler, Christophe and Robert, Damien},
 Title = {Spanning the isogeny class of a power of an elliptic curve},
 FJournal = {Mathematics of Computation},
 Journal = {Math. Comput.},
 ISSN = {0025-5718},
 Volume = {91},
 Number = {333},
 Pages = {401--449},
 Year = {2022},
 Language = {English},
 DOI = {10.1090/mcom/3672},
 zbMATH = {7446421},
 Zbl = {1486.14046}
}

@InCollection{gelin,
 Author = {G{\'e}lin, Alexandre and Howe, Everett and Ritzenthaler, Christophe},
 Title = {Principally polarized squares of elliptic curves with field of moduli equal to {{\(\mathbb{Q}\)}}},
 BookTitle = {ANTS XIII. Proceedings of the thirteenth algorithmic number theory symposium, University of Wisconsin-Madison, WI, USA, July 16--20, 2018},
 ISBN = {978-1-935107-02-6; 978-1-935107-03-3},
 Pages = {257--274},
 Year = {2019},
 Publisher = {Berkeley, CA: Mathematical Sciences Publishers (MSP)},
 Language = {English},
 DOI = {10.2140/obs.2019.2.257},
 zbMATH = {7721129},
 Zbl = {1517.11063}
}

@book{gauss,
  author    = {Carl Friedrich Gauss},
  title     = {Disquisitiones Arithmeticae},
  year      = {1801},
  note      = {French translation by Poullet-Delisle (1807); German translation by H. Maser (1889); English translation by A. A. Clarke (1965); 2nd revised ed. by C. S. Waterhouse et al. (1986); Spanish translation by H. Barrantes Campos, M. Josephy, and A. Ruiz Z{\'u}{\~n}iga (1995). Reprinted 1910, 1953.}
}

@Article{kanihumbert,
 Author = {Kani, Ernst},
 Title = {The moduli spaces of {Jacobians} isomorphic to a product of two elliptic curves},
 FJournal = {Collectanea Mathematica},
 Journal = {Collect. Math.},
 ISSN = {0010-0757},
 Volume = {67},
 Number = {1},
 Pages = {21--54},
 Year = {2016},
 Language = {English},
 DOI = {10.1007/s13348-015-0148-9},
 zbMATH = {6550621},
 Zbl = {1353.14040}
}

@misc{borevich,
 author = {Borevich, Zenon I. and Faddeev, Dmitrii K.},
 title = {Representations of orders with a cyclic index},
 year = {1965},
 language = {English},
 howpublished = {Proc. {Steklov} {Inst}. {Math}. 80 (1965), 56-72 (1968); translation from {Tr}. {Mat}. {Inst}. {Steklov} 80, 51-65 (1965).},
 zbMATH = {3286148},
 Zbl = {0179.06101}
}

@book{coxlib,
 author = {Cox, David A.},
 title = {Primes of the form {{\(x^2+ny^2\)}}. {Fermat}, class field theory, and complex multiplication. {Third} edition with solutions. {With} contributions by {Roger} {Lipsett}},
 edition = {3rd edition},
 fseries = {AMS Chelsea Publishing},
 series = {AMS Chelsea Publ.},
 volume = {387},
 isbn = {978-1-4704-7028-9; 978-1-4704-7183-5},
 year = {2022},
 publisher = {Providence, RI: American Mathematical Society (AMS)},
 language = {English},
 doi = {10.1090/chel/387},
 zbMATH = {7635220},
 Zbl = {1504.11002}
}

@phdthesis{gruenewald,
  author       = {David Gruenewald},
  title        = {Explicit Algorithms for Humbert Surfaces},
  school       = {University of Sydney},
  year         = {2008},
  month        = {December},
  type         = {PhD Thesis},
}

@article{kani-abelian,
 author = {Kani, Ernst},
 title = {Elliptic curves on abelian surfaces},
 fjournal = {Manuscripta Mathematica},
 journal = {Manuscr. Math.},
 issn = {0025-2611},
 volume = {84},
 number = {2},
 pages = {199--223},
 year = {1994},
 language = {English},
 doi = {10.1007/BF02567454},
 url = {https://eudml.org/doc/155988},
 zbMATH = {701163},
 Zbl = {0821.14023}
}

@article {liu,
  author =       {Liu, Qing},
  TITLE =        {Conducteur et discriminant minimal de courbes de genre
                  {$2$}},
  JOURNAL =      {Compositio Math.},
  FJOURNAL =     {Compositio Mathematica},
  VOLUME =       94,
  YEAR =         1994,
  NUMBER =       1,
  PAGES =        {51--79},
}

@Article{shimura61:_compl_abelian,
  author =       {Shimura, Goro and Taniyama, Yutaka},
  title =        {Complex multiplication of {A}belian varieties and its
                  applications to number theory},
  journal =      {Publ. Math. Soc. Japan},
  year =         1961,
  volume =       6
}

@book{geerhilbert,
 author = {van der Geer, Gerard},
 title = {Hilbert modular surfaces},
 fseries = {Ergebnisse der Mathematik und ihrer Grenzgebiete. 3. Folge},
 series = {Ergeb. Math. Grenzgeb., 3. Folge},
 issn = {0071-1136},
 volume = {16},
 isbn = {3-540-17601-2},
 year = {1988},
 publisher = {Berlin etc.: Springer-Verlag},
 language = {English},
 zbMATH = {192950},
 Zbl = {0634.14022}
}

\end{document}